\theoremstyle{plain}
\newtheorem{thm}{Theorem}
\newtheorem{lem}{Lemma}
\newtheorem{definition}{Definition}
\newtheorem{prop}{Proposition}
\theoremstyle{remark}
\newtheorem{rem}{Remark}
\numberwithin{equation}{section}
\def\ens{\ensuremath}                                     
\newcommand\mtb[1]{\ens{\mathbb{#1}}}                     
\newcommand{\N}{\mtb{N}}	   \newcommand{\Q}{\mtb{Q}}   	\newcommand{\R}{\mtb{R}}	
	   \newcommand{\Z}{\mtb{Z}}	    
\newcommand{\C}{\mtb{C}}    
         \newcommand{\M}{\mtb{M}}
\newcommand\mtc[1]{\ens{\mathcal{#1}}}           
\newcommand{\MM}{\mtc{P}}                	
\newcommand{\alp}{\ens{\alpha}}  \newcommand{\bet}{\ens{\beta}}
  \newcommand{\eps}{\ens{\varepsilon}}
\newcommand{\gam}{\ens{\gamma}}	\newcommand{\lam}{\ens{\lambda}}
\newcommand\bld[1]{\ens{\boldsymbol{#1}}}
\newcommand\uv[2]{\scalebox{#1}{#2}} 
\newcommand{\dist}{{\bf dist}}                       
\newcommand\hl[1]{{\center \color{red} \ens{\bs\bs\bs} \\}} 
\newcommand\hsp[1]{\mbox{}\hspace{#1mm}} 
\newcommand\vsp[1]{\par \vspace{#1mm}} 
\def\bs{{\bigstar}}                     
\newcommand{\supp}{\uv{.9}{\ens{\mathbf{supp}}}}  
\def\bul{\ens{\bullet}}                 
\newcommand\mbc[1]{}                                                    
\renewcommand{\v}{\vsp}
\newcommand{\h}{\hsp}
\newcommand{\Dst}{\displaystyle}
\newcommand{\Tst}{\textstyle}
\newcommand{\Sst}{\scriptstyle}
\newcommand{\SSst}{\scriptscriptstyle}
\newcommand{\wm}{\mbox{\ens{\SSst \rm  wm}}}
\newcommand{\nwm}{\mbox{\ens{\SSst \rm  nwm}}}
\newcommand{\po}{\mbox{\ens{\SSst\psi=0}}}
\newcommand{\pgo}{\mbox{\ens{\SSst\psi>0}}}
\newcommand{\minus}{\!\setminus\!}
\newcommand{\now}{}
\renewcommand{\ni}{\noindent}
\def\A{\ens{\mtc A}} 
\def\B{\ens{\mtc B}} 
\title[A criterion for weak mixing of induced IETs {\h5 \now}]
{A condition for weak mixing \\ of induced IETs {\h5 \now}}
\author[M. Boshernitzan {}]{Michael Boshernitzan {}}
\address{Department of Mathematics, Rice University, Houston, TX~77005, USA}
\email{michael@rice.edu}
\begin{document}
\maketitle
\vsp5
\begin{abstract}
Let  $f\colon X\to X$, $X=[0,1)$,  be an ergodic IET (interval exchange transformation)
relative to the Lebesgue measure on $X$.  Denote by $f_t\colon X_t\to X_t$ the IET 
obtained  by inducing  $f$  to the subinterval $X=[0,t)$, $0<t<1$. We show that 
\[
X_{\wm}=\{0<t<1\mid  f_{t}  \text{ is weakly mixing}\}
\]
is a residual subset of  $X$  of full Lebesgue measure.
The result is proved by establishing a generic Diophantine sufficient 
condition on $t$  for $f_{t}$ to be weakly mixing.
\end{abstract}
\section{IETs: minimality, ergodicity and mixing}
Denote by $\lam$ the Lebesgue measure on the real line $\R$. Denote by $\Z$, 
$\N=\{k\in\Z\mid k\geq1\}$  the sets of integers and of natural numbers, respectively.
We write $\sharp(S)$  for the cardinality of a set $S$.

An IET (interval exchange transformation) is a pair  $(X,f)$ where $X=[0,b)\subset\R$ 
is a bounded interval, and $f$ is a right continuous bijection $f\colon X\to X$ of it 
with a finite set  $D$ of discontinuities and such that  $f'(x)=1$ for all\,  
$x\in X\minus D$. (The last conditions means that $f$  is a local translation 
at every its continuity point).
We often refer to the map $f$  itself as an IET.

Let $\sharp(D)=r-1$, $r\geq2$. Then  without loss of generality                       \mbc{eq:disc\\1.1}
\begin{equation}\label{eq:disc}
    D=\{d_k\}_{k=1}^{r-1}; \quad d_0=0<d_1<\ldots<d_{r-1}<b=d_r.
\end{equation}
(The conventions  $d_0=0$, $d_r=b$ are used for convenience. 
Note that $0,b\notin D$).

An IET $(X,f)$ (or $f$) with $\sharp(D)=r-1$ is also called (more specifically) 
an $r$-IET referring to the fact that $f$  exchanges the $r$~intervals  
$X_{k}=[d_{k-1},d_{k})\subset X$,  $1\leq k\leq r$,  
according to some permutation  $\rho\in S_{r}$.

An $r$-IET  is completely determined by this permutation $\rho\in S_{r}$ and 
the lengths $\lambda_{k}=d_{k}-d_{k-1}>0$, $1\leq k\leq r$, of exchanged 
subintervals~$X_{k}$. Thus $r$-IETs  can be identified with
pairs $({\vec\lam},\rho)$  where $\vec\lam\in (\R^{+})^r$ and  
$\rho\in S_{r}$:   $(X,f)=({\vec\lam},\rho)$.

A permutation $\rho\in S_{r}$ is called {\em irreducible}\/ if
$\rho(\{1,2,\ldots,k\})\neq(\{1,2,\ldots,k\})$,  for all $k<r$.

An IET $(X,f)$  is called minimal if all $f$-orbits are dense in $X$. Note that 
the irreducibility of  $\rho$  is a necessary condition for $f$  to be minimal 
because otherwise  $X$  splits into two  \mbox{$f$-invariant} subintervals.

Keane \cite{Kea_IET} proved that if $r\geq2$ and if $\rho\in S_{r}$ is irreducible 
then the IET $({\vec\lam},\rho)$  is minimal provided~that the lengths  $\lam_{k}$ 
of exchanged intervals are linearly independent over the rationals 
(a generic assumption on~$\vec\lam$).

Masur \cite{Mas} and Veech  \cite{Ve_Gauss} independently proved the following 
result  (conjectured by Keane in \cite{Kea_IET}): 
If  $r\geq2$ and $\rho\in S_{r}$  is irreducible, then for Lebesgue almost all 
$\vec \lam\in(\R^{+})^r$ the IET $({\vec\lam},\rho)$ is uniquely ergodic (all its orbits 
are uniformly distributed). Alternative approaches to Keane's conjecture were later given 
by Rees~\cite{Rees}, Kerchoff \cite{Ker} and Boshernitzan \cite{Bo_con1}.
(Boshernitzan exhibited some Diophantine conditions, including a generic one, 
Property P, for unique ergodicity of IETs, see \cite{Bo_con1}, \cite{Bo_rank2} and 
\cite{Ve_Bos}, an improvement by Veech. The idea to use a suitable generic 
Diophantine condition to establish a metric result is also central in the present paper).

Avila and Forni \cite{AvFo} proved that Lebesgue almost all IETs are weakly mixing 
(assuming that $\rho$  is irreducible and not a rotation). Partial results in this 
directions were obtained earlier by Katok and Stepin \cite{KaSt} who established weak 
mixing of generic $3$-IETs and Veech \cite{Ve_metric} who proved generic weak mixing 
for a large class of permutations (Veech permutations).  
It was later shown by Boshernitzan and Nogueira \cite{BoNo} that if an IET 
$f=({\vec\lam},\rho)$  satisfies property P (a generic condition used in \cite{Bo_con1}),
or even a weaker condition \cite{Ve_Bos},  and if~$\rho$~is a Veech permutation, then  
$f$  is weakly mixing.

Note that Katok proved that IETs are never (strongly) mixing \cite{Ka_nomix}. 
On the other hand, Chaika \cite{chaika_mix1} constructed a $4$-IET which is 
topologically mixing. Boshernitzan and Chaika \cite{BoCh} showed  that $3$-IETs are 
never topologically mixing.
\v3
\section{The results.}
In what follows let $(X,f)$  be a fixed aperiodic $r$-IET, $X=[0,b)$, $r\geq2$.
(Aperidicity of $f$ means absence of $f$-periodic points).

Denote by $X_{t}$, $0<t<b$, the subinterval $[0,t)\subset X$ and by $f_{t}$ the IET 
obtained by inducing $f$  to $X_{t}$. It is well known that each $(X_{t},f_{t})$ 
is an $s$-IET  with  $s=s(t)\leq n+1$ (see \cite{CFS} or \cite{Kea_IET}). 
(In fact, $s(t)\geq2$  due to the aperiodicity assumption).

The central result of the paper is the following.                                     
Recall that $\lam$ stands for the Lebesgue measure on  $X$.                              \mbc{thm:gwm\\1}
\begin{subequations}
\begin{thm}\label{thm:gwm}
Let  $(X,f)$  be a $\lam$-ergodic IET, $X=[0,b)$.  Then the set 
\begin{equation}\label{eq:xwm}
X_{\wm}=X_{\wm}(\lam)=\{0<t<b\mid f_{t} \text{ is weakly mixing (relative $\lam$)}\,\}
\end{equation}
is a residual set of full measure: $\lam(X_{\wm})=1$.
\end{thm}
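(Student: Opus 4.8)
The plan is to extract, from the weak-mixing criterion proved for the original IET in the cited work of Avila--Forni and of Boshernitzan--Nogueira, a Diophantine property of the induction parameter $t$ that (i) is sufficient for $f_t$ to be weakly mixing, and (ii) holds for a residual set of full $\lam$-measure. More precisely, weak mixing of an ergodic IET is the statement that for every real $s\neq 0$ the equation $U_g\phi = e^{2\pi i s}\phi$ has no nonzero measurable solution $\phi$, where $U_g$ is the Koopman operator of $g=f_t$. The strategy is to relate the spectral data of $f_t$ to that of $f$ via the induction (Rokhlin) tower: since $f$ is $\lam$-ergodic, $f_t$ is $\lam$-ergodic as well, and an $f_t$-eigenfunction with eigenvalue $e^{2\pi i s}$ can be spread out over the return tower to produce constraints relating $s$ to the return-time function $r_t\colon X_t\to\N$ and to the geometry of the Rauzy--Veech renormalization of $f$ along the sequence of induced intervals $X_{t_n}$ with $t_n\to t$.

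The key steps, in order, would be: (1) Fix $f$ and describe the induction $f\mapsto f_t$ combinatorially: choose a sequence of admissible cut points $t_n$ (e.g. along the Rauzy--Veech orbit of $f$, or more flexibly along all points in the forward and backward $f$-orbits of the discontinuities) so that $f_{t_n}$ is given by a product of elementary renormalization matrices $A_n\in SL(r,\Z)$ acting on the length vector $\vec\lam$. (2) Formulate the Diophantine condition $(\star)$ on $t$: roughly, that the length vector of $f_t$ is "badly approximable" in the sense used by Avila--Forni --- the renormalization cocycle along $t$ has the right hyperbolicity/unique-ergodicity behaviour and the "bad" directions (which would force a rational or Weyl-type eigenvalue relation $\|s \cdot h^{(n)}\|\to 0$ for the tower heights $h^{(n)}$) are avoided. (3) Show $(\star)\Rightarrow f_t$ weakly mixing, by the same eigenvalue-exclusion argument as in \cite{AvFo}/\cite{BoNo}: an eigenvalue $e^{2\pi i s}$ forces $\|s\cdot h^{(n)}\|\to 0$ along the heights of the induced towers, and $(\star)$ rules this out for all $s\neq0$; ergodicity gives the eigenvalue $1$ is simple, so there is no nontrivial eigenvalue at all. (4) Show $(\star)$ is residual and of full measure in $t\in(0,b)$: genericity follows because $(\star)$ is a countable intersection of open dense conditions on $t$ (each saying the renormalization data up to stage $n$ is nondegenerate), and full measure follows from a Borel--Cantelli / unique-ergodicity estimate --- here one uses that the fibered map $t\mapsto$ (renormalization data of $f_t$) pushes Lebesgue measure to a measure absolutely continuous with respect to the natural invariant measure of the Rauzy--Veech renormalization, for which almost every point already satisfies the Avila--Forni condition.

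I expect the main obstacle to be step (4): transferring the known full-measure statement "almost every IET is weakly mixing" (a statement about Lebesgue measure on the parameter simplex of \emph{all} $r$-IETs) into the full-measure statement along the \emph{one-parameter curve} $t\mapsto f_t$ obtained by inducing a \emph{single fixed} $f$. A priori this curve is $\lam$-null in parameter space, so one cannot simply quote Avila--Forni; the content of the theorem is precisely that the induction map does not conspire to land in the null "non-weakly-mixing" set. The resolution should be to work intrinsically: express the non-weak-mixing set as $\bigcup_{s\neq0}\bigcap_{n}\{t : \|s\, h^{(n)}(t)\| < \eps_n\}$, discretize in $s$, and use an effective equidistribution/quantitative-unique-ergodicity input for $f$ itself (available since $f$ is ergodic, with an additional Diophantine assumption absorbed into the generic set) to bound the measure of each such set, summing to zero. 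A secondary obstacle is bookkeeping the passage between one-sided and two-sided induction and making sure the aperiodicity of $f$ guarantees $s(t)\geq 2$ and that the towers genuinely grow, i.e. $h^{(n)}(t)\to\infty$, which is what makes the eigenvalue obstruction bite; this is where Keane-type minimality arguments enter, and it is routine but must be stated carefully. Finally, the residual-set part is the easiest: it is a soft Baire-category argument once the open-dense building blocks of $(\star)$ are identified.
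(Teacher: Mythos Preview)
Your proposal has the right architecture --- find a Diophantine condition on $t$ that both implies weak mixing of $f_t$ and holds generically --- but the implementation you sketch is far heavier than necessary, and the obstacle you yourself flag in step~(4) is genuine and unresolved. You are trying to read off properties of $f_t$ as a point in the simplex of all IETs (via Rauzy--Veech data) and then import a full-measure statement on that simplex; as you note, the curve $t\mapsto f_t$ is null there, and your proposed fix (pushing $\lam$ forward to something absolutely continuous with respect to Masur--Veech measure, or running Borel--Cantelli with ``effective equidistribution'' of $f$) is not available under the sole hypothesis that $f$ is $\lam$-ergodic --- you have no quantitative input on $f$ to spend, and you cannot ``absorb an additional Diophantine assumption on $f$'' into a generic set in $t$, since $f$ is fixed.

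The paper sidesteps this completely by choosing a Diophantine condition \emph{intrinsic to the orbit of $t$ inside the single system $(X,f)$}, never leaving for the parameter space of IETs. Set $\rho'_n(t)$ to be (up to a factor) the minimum of the pairwise gaps $|f^p(t)-f^q(t)|$ and the distances $\dist(f^k(t),D\cup\{0,b\})$ over $|p|,|q|,|k|\le n$, and let $\psi(t)=\limsup_n n\rho'_n(t)$. The condition is simply $\psi(t)>0$. For sufficiency (your step~(3)): if $F\colon X_t\to S^1$ is an eigenfunction for $f_t$ with eigenvalue $\theta\neq1$, extend $F$ to $X$ constant along tower fibres; $\psi(t)>0$ provides, for infinitely many $n$, a disjoint $f$-stack of $2n+1$ intervals of width $\asymp 1/n$ centred on the orbit of $t$, and comparing the limits of the averages of $F$ over the left-half, right-half, and full levels forces $\theta=1$ by an elementary rigidity argument --- no cocycle, no tower-height criterion $\|s\,h^{(n)}\|\to0$. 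For genericity (your step~(4)): $\psi$ is manifestly $f$-invariant, so by ergodicity of $f$ it suffices to show $\lam\{\psi>0\}>0$; this follows because any minimal $r$-IET has arbitrarily long disjoint $f$-stacks of measure $\ge b/r$, and the middle thirds of their middle levels lie in $\{\psi>0\}$. The same construction exhibits a dense $G_\delta$ inside $\{\psi>0\}$, giving residuality. The key idea you are missing is that defining the condition via the $f$-orbit of $t$, rather than via renormalization of $f_t$, converts the measure-transfer problem into a one-line application of ergodicity of $f$.
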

Because of the above result, we refer to the complement set  
\begin{equation}\label{eq:xnwm}
X_{\nwm}=(0,b)\minus X_{\wm}
\end{equation}
as ``the exceptional set for (weakly mixing induction of) (X,f)''.\v2
\end{subequations}
\begin{rem}
Two additional versions of Theorem \ref{thm:gwm} (for minimal but not uniquely ergodic IETs)
are given in Section~\ref{sec:2vers}.
\end{rem}


We need some notation. Recall that  $D=\{d_k\}_{k=1}^{r-1}$ stands 
for the set of discontinuities of~$f$. Denote by
\[
D_{0}=D\cup\{0,b\}=\{d_k\}_{k=0}^r
\]
the set of $(r+1)$ points in \eqref{eq:disc}, and denote by
\begin{equation}\label{eq:dp}
D'\!=\!\!\Tst\bigcup_{\SSst k=-\infty}^{\infty}\limits\!f^k(D)
\end{equation}
the set of points whose orbits hit $D$. 

The aperiodicity of $f$ implies that  $D'$  is a dense countable subset 
of $X=[0,b)$ contai\-ning~$0$  (see e.g. \cite[Section 2]{Bo_rank2}).

For $x\in X$ and $n\geq1$,   we set  
\begin{subequations}\label{eqs:r}
\begin{align}
\rho(x)&=\dist(x,D_0)=\min_{0\leq k\leq r}\limits |x-d_{k}|\label{eq:ra}\\
\rho_{n}(x)&=\min_{-n\leq k\leq n-1}\rho(f^k(x))\label{eq:rb}\\
\Delta_{n}(x)&=\min_{\substack{|p|,|q|\leq n\\ p\neq q}}
    |f^{p}(x)-f^{q}(x)|\label{eq:rc}\\[-5mm]
\intertext{and}
\rho'_{n}(x)&=\min(\rho_{n}(x), \tfrac12\Delta_{n}(x)).\label{eq:rd}
\end{align}
\end{subequations}
Note that for all\, $x\!\in\!D'$ both $\rho_{n}(x)$ and $\rho'_{n}(x)$  vanish for 
large $n$, while  for\, $x\!\in\! X\!\setminus\! D'$ we have\,  
\mbox{$\rho_{n}(x)\!\geq\!\rho'_{n}(x)\!>\!0$}  due to the aperiodicity of $f$.

Important interpretations of the values $\rho_{n}(x)$ and $\rho'_{n}(x)$ are given  
by Propositions \ref{prop:rho1} and \ref{prop:rho2} in the next section.
The following two functions                                                        
\begin{subequations}
\begin{align}
&\phi\colon X\to\R; && \phi(x)=\phi_{f}(x)=\limsup_{n\to\infty} \ n\rho_{n}(x);
\label{eq:fi} \\[-2mm]
\intertext{and}
&\psi\colon X\to\R; && \psi(x)=\psi_{f}(x)=
\limsup_{n\to\infty} \ n\rho'_{n}(x);\label{eq:psi}
\end{align}
will play central role in the paper. (We usually suppress the dependence                  \mbc{thm:cwm\\2}
on $f$ by writing $\phi,\psi$ rather than $\phi_{f},\psi_{f}$ due to the 
standing assumption that $(X,f)$ is a fixed aperiodic IET).      
\end{subequations}   
\begin{thm}[\bf A sufficient condition for weak mixing of $f_{t}$] \label{thm:cwm}
Let  $f\colon\! X\to\! X$  be a \mbox{$\lam$-ergodic} IET, $X\!=\![0,b)$.  If \  $\psi(t)>0$ 
for some  $t\in(0,b)$,  then the induced IET\, \mbox{$f_t\colon X_{t}\!\to\! X_{t}$}, 
$X_{t}=[0,t)$, is weakly mixing.
\end{thm}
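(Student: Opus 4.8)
The plan is to show that $f_t$ has no nontrivial measurable eigenfunctions, using the classical approach of obtaining growth estimates on Birkhoff sums of the roof function of $f$ over $f_t$ together with the Diophantine information packaged in $\psi(t)>0$. First I would recall that inducing $f$ on $X_t=[0,t)$ produces an $s$-IET $f_t$ whose return time $r_t\colon X_t\to\N$ is bounded, and that $f_t$ is again ergodic (and indeed minimal and uniquely ergodic whenever $f$ is, via standard Kakutani-tower / Rokhlin-tower arguments). An eigenfunction $u$ of $f_t$ with eigenvalue $e^{2\pi i\alpha}$, $\alpha\in\R$, lifts to a function on the tower over $f$, so that along the $f$-orbit of a point $x$, the phase $\alpha$ times the $f_t$-return-count behaves rigidly. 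The key point is to translate ``$\psi(t)>0$'' into the existence of a sequence $n_k\to\infty$ and a sequence of intervals (the natural "Rokhlin tower" intervals for $f$ around time $n_k$ of total length comparable to $1/n_k$ on which the relevant Birkhoff sum of the characteristic function of $X_t$, i.e. the $f_t$-return count, is \emph{constant}) so that comparing two such towers forces $\alpha$ times an integer to be close to an integer for a whole scale of integers; this is exactly the mechanism by which $\alpha\in\Z$, i.e. the eigenvalue is trivial.

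More concretely, I expect that the quantities $\rho_n$, $\Delta_n$, and hence $\rho'_n$ and $\psi$ have the interpretations promised in Propositions \ref{prop:rho1} and \ref{prop:rho2}: $\rho_n(x)$ should be (up to constants) the length of the largest interval $J\ni x$ on which $f^j$ is continuous for $|j|\le n$, and $\tfrac12\Delta_n(x)$ controls when the iterates $f^p(x)$, $|p|\le n$, are all distinct and spread out, so that $n\rho'_n(x)$ bounded below along a subsequence means there is an interval around $x$ of length $\gtrsim 1/n_k$ that is ``rigid'' to order $n_k$ under $f$ — all its points stay bunched together under $f^j$, $|j|\le n_k$, and in particular the combinatorics of how the $f$-orbit enters $X_t$ is the same for the whole interval. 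Choosing $x=t$ (the endpoint defining $X_t$) is what makes this about $f_t$: the tower over $X_t$ of height up to $n_k$ can then be cut just to the left of $t$, and the Birkhoff sum $S^{f_t}_{m} r_t$ at the base point of the tall column equals $n_k$ up to $O(1)$ while the spatial displacement is $\lesssim 1/n_k$, which is the "$\sigma$-rigidity along a sequence" that kills all eigenvalues except $1$. Thus the skeleton is: (1) $\psi(t)>0\Rightarrow$ a rigidity sequence for $f$ localized near $t$; (2) push this rigidity through the inducing construction to a rigidity sequence for $f_t$ (with control on the return-time Birkhoff sums); (3) rigidity of $f_t$ along a sequence $+$ ergodicity $\Rightarrow$ continuous spectrum off the constants $\Rightarrow$ weak mixing.

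For step (3) I would use the standard fact that if $T$ is ergodic and there is an increasing sequence $q_k$ with $T^{q_k}\to\mathrm{Id}$ in the strong (or measure) sense \emph{but} this is refined enough — here what we really get is not plain rigidity but the more delicate "$f_t$ has a sequence of Rokhlin towers of height $q_k\to\infty$ whose top and bottom nearly coincide after the phase twist" — to run the Veech/Katok–Stepin eigenvalue-exclusion argument: an eigenfunction $u$ would have to be almost constant on each column of the tower, while the eigenvalue relation across the height $q_k$ of the tower forces $e^{2\pi i\alpha q_k}\to 1$, and a second family of towers with heights $q'_k$ not commensurable to $q_k$ (obtained from a different choice of scale in the $\limsup$) forces $\alpha\in\Z$. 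The main obstacle, and where the real work of the paper must go, is step (2): making the passage from the Diophantine condition on $t$ — which is phrased entirely in terms of the \emph{original} $f$-orbit of the single point $t$ via $\rho'_n(t)$ — into genuine \emph{dynamical} towers for the induced map $f_t$, with simultaneous control of (a) the heights $q_k\to\infty$, (b) the measure of the tower bounded below, and (c) the near-coincidence of levels after twisting by the eigenvalue; controlling the return-time Birkhoff sums $S^{f_t}_m r_t$ and showing they are essentially constant across the relevant tower is the crux, and is presumably exactly what Propositions \ref{prop:rho1}–\ref{prop:rho2} are designed to supply.
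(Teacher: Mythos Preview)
Your plan has the right starting point---lift an eigenfunction $G$ of $f_t$ to a function $F$ on $X$ and analyze it through the tower/stack structure around $t$---but the mechanism you propose for excluding nontrivial eigenvalues does not work. The implication ``rigidity of $f_t$ along a sequence $+$ ergodicity $\Rightarrow$ continuous spectrum'' is false: irrational rotations are rigid and ergodic but have pure point spectrum. You try to repair this by invoking a second family of towers with heights $q'_k$ ``not commensurable'' to $q_k$, but nothing in $\psi(t)>0$ guarantees such a second scale exists, and even if it did, the two conditions $e^{2\pi i\alpha q_k}\to 1$ and $e^{2\pi i\alpha q'_k}\to 1$ do not force $\alpha\in\Z$ without further quantitative control that you have not supplied. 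The real crux is not step (2) but step (3), and the obstacle is not bookkeeping but a missing idea.

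The paper's argument supplies that idea by exploiting the left/right asymmetry at the point $t$ itself, which your plan never uses. Lifting $G$ to $F\colon X\to S^1$ (extend by pulling back along the $f^{-1}$-orbit until you hit $X_t$) gives the functional equation $F(f(x))=\theta F(x)$ for $x<t$ and $F(f(x))=F(x)$ for $x\geq t$. Now take the distinct $f$-stacks $\big(B_{\eps_n}(f^k(t))\big)_{|k|\le n}$ with $\eps_n\asymp 1/n$ coming from $\psi(t)>0$, and split each level into its left half $C_n^k$ and right half $B_n^k$. A Lebesgue-density argument along the stacks shows that the averages of $F$ over each of $A_n^k$, $B_n^k$, $C_n^k$ converge (along a subsequence, for $k=0,1$) to points $\alpha^i,\beta^i,\gamma^i$ of modulus exactly $1$. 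Since $\alpha^i=(\beta^i+\gamma^i)/2$ and all three lie on $S^1$, one gets $\beta^i=\gamma^i$. But the functional equation treats the two halves differently at level $0$: crossing $t$ from the left half picks up a factor $\theta$ while the right half does not, so $\beta^1=\beta^0$ while $\gamma^1=\theta\gamma^0$. Combining gives $\theta=1$. No incommensurability or second family of towers is needed; the contradiction comes from comparing the two sides of $t$ within a single stack.
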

\begin{subequations}
For an aperiodic IET\,   $f\colon X\to X$,  $X=[0,b)$, the set
\begin{equation}\label{eq:crit}
X_{\po}=\big\{t\in (0,b)\mid \psi(t)=0\big\}
\end{equation}
will be referred as ``the critical set for $(X,f)$''. We also adopt similar notation
\begin{equation}\label{eq:crit2}
X_{\pgo}=(0,b)\!\setminus\!X_{\po}=\big\{t\in (0,b)\mid \psi(t)>0\big\}
\end{equation} 
\end{subequations}
for the complement of this set.

Theorem \ref{thm:cwm} claims that for an \lam-ergodic  IET\,  $(X,f)$\,  the inclusion
$X_{\nwm}\!\subset\!X_{\po}$  takes place. (In other words, every exceptional point 
must be critical, see \eqref{eq:xnwm} and   \eqref{eq:crit2}). 

Equivalently, $X_{\pgo}\!\subset\!X_{\wm}$ (see \eqref{eq:xwm} and \eqref{eq:crit}).

The importance of Theorem \ref{thm:cwm} is twofold. 
First, it provides a {\em generic} condition   (Theorem~\ref{thm:mgg})  
sufficient to establish Theorem \ref{thm:gwm} claiming that the exceptional set 
$X_{\nwm}$ is small in both measure and topology categories. 

Secondly, it allows (under certain Diophantine conditions on\, $t$\, and\, $f$) to 
establish more delicate infor\-ma\-tion on the ``smallness'' of the exceptional set  
$X_{\nwm}$. In particular, there are examples of ergodic IETs $(X,f)$ for which one 
can show that the critical set  $X_{\po}$  coincide with $D'$,  and hence  
$X_{\nwm}\subset D'$  is at most countable (see Section \ref{sec:last}).

An IET $(X,f)$ is called {\em persistently weakly mixing}\/ if  $X_{\nwm}=\emptyset$.              \mbc{25}
It is easy to see that there are no persistently weakly mixing $r$-IETs
with $r<4$  (because then  $f_{t}$  become rotations for a countable set
of $t$).

We believe that there exist persistently weakly mixing  IETs.

{\bf Question}. What is the ``size''  (in the sense of measure, category and cardinality)
of $X_{\nwm}$  for ``most''  $4$-IETs  with permutation $\rho=(4321)$?

The answers for the same questions for $r$-IETs  with $r=2$ or $3$ are known
(see Section \ref{sec:last} for the answers without proofs).

\begin{thm}\label{thm:mgg}
Let  $f\colon X\to X$  be an aperiodic IET, $X=[0,b)$.                                 \mbc{thm:mgg\\3}
Then the critical set $X_{\po}$  is meager and has Lebesgue measure $0$.     
\end{thm}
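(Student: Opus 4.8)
The plan is to obtain the measure statement from a uniform separation estimate at well‑chosen times, and the topological statement from an elementary construction based on inducing $f$ on short intervals. Preliminary reductions: an aperiodic IET partitions $X$ into finitely many $f$‑invariant subintervals on each of which $f$ is minimal, and for $t$ in such a component the quantities $\rho_n(t),\Delta_n(t),\rho'_n(t),\psi(t)$ computed for $f$ agree with those for the restriction, so (a finite union of meager $\lam$‑null sets being meager and $\lam$‑null) we may assume $f$ minimal. Since, as noted before Theorem~\ref{thm:cwm}, $\rho'_n(t)=0$ for all large $n$ when $t\in D'$, we have $D'\subseteq X_{\po}$; being countable, $D'$ is meager and $\lam$‑null and can be set aside. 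Writing $E_{c,N}:=\{t\in X\setminus D':\rho'_n(t)\le c/n\text{ for all }n\ge N\}$, the definition $\psi(t)=\limsup_n n\rho'_n(t)$ gives $X_{\po}\setminus D'=\bigcap_{c>0}\bigcup_{N\ge1}E_{c,N}$; and on $X\setminus D'$ the orbit of $t$ avoids $D$, so each $f^k$, hence each $\rho'_n$, is continuous there and every $E_{c,N}$ is relatively closed in $X\setminus D'$.

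The heart of the proof is a \emph{uniform separation lemma}: there exist $\kappa=\kappa(f)>0$ and integers $n_1<n_2<\cdots$, depending only on $f$, with $n_k\Delta_{n_k}(t)\ge\kappa$ for every $t\in X$ and every $k$ --- i.e.\ at the times $n_k$ the points $\{f^j(t):|j|\le n_k\}$ are $(\kappa/n_k)$‑separated, uniformly in $t$. Since $\inf_t\Delta_n(t)=\min_{1\le m\le 2n}\delta_m$, where $\delta_m$ is the smallest translation length of $f^m$, this is equivalent to $\limsup_{N}N\min_{1\le m\le N}\delta_m>0$. For a rotation by $\alpha$ ($r=2$) it is the three‑distance theorem: $\delta_m=\|m\alpha\|$, the record times of $(\delta_m)$ are the continued‑fraction denominators $q_j$, and $q_{j+1}\|q_j\alpha\|\asymp1$, so taking $n_k$ just below $q_{k+1}/2$ gives $n_k\Delta_{n_k}(t)>1/5$ with $\kappa$ absolute. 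In general I would prove it from the Rokhlin‑tower (Rauzy--Veech) description of $f$: along a subsequence of stages for which the renormalization data stay \emph{balanced}, the tower over a short induced interval $[0,\ell_k)$ has maximal height $n_k\asymp b/\ell_k$ and distributes $\{f^j(t):|j|\le n_k\}$ among distinct levels of comparable width $\asymp\ell_k$, forcing $\Delta_{n_k}(t)\gtrsim\ell_k\asymp1/n_k$ for all $t$. The step I expect to be the main obstacle is exactly the extraction of such a balanced subsequence for a general minimal (not necessarily uniquely ergodic) IET: in the uniquely ergodic case it is the standard recurrence of the renormalization cocycle to a compact set, but the general case requires a finer analysis of the record times of $(\delta_m)$ --- this is where the interpretations of $\rho_n$ and $\rho'_n$ from Propositions~\ref{prop:rho1} and \ref{prop:rho2} should enter.

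Granting the lemma, the measure statement is short. Fix $0<c<\kappa/2$. Then $\tfrac12\Delta_{n_k}(t)\ge\kappa/(2n_k)>c/n_k$ for all $t$, so $\rho'_{n_k}(t)<c/n_k$ forces $\rho_{n_k}(t)<c/n_k$; using that each $f^{-j}$ preserves $\lam$ and $D_0$ has $r+1$ points,
\[
\lam\bigl(\{t:\rho'_{n_k}(t)<c/n_k\}\bigr)\le\lam\Bigl(\textstyle\bigcup_{j=-n_k}^{\,n_k-1}f^{-j}\bigl(\{x:\dist(x,D_0)<c/n_k\}\bigr)\Bigr)\le 2n_k\cdot 2(r+1)\tfrac{c}{n_k}=4(r+1)c .
\]
Every $t\in X_{\po}$ has $n\rho'_n(t)\to0$, hence $\rho'_{n_k}(t)<c/n_k$ for all large $k$ (as $n_k\to\infty$); thus $X_{\po}\subseteq\liminf_k\{t:\rho'_{n_k}(t)<c/n_k\}$ and Fatou's lemma gives $\lam(X_{\po})\le\liminf_k\lam(\{t:\rho'_{n_k}(t)<c/n_k\})\le4(r+1)c$. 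Letting $c\downarrow0$ yields $\lam(X_{\po})=0$.

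For meagerness it suffices to show that, for one fixed small $c$, every $E_{c,N}$ is nowhere dense in $X\setminus D'$; then $X_{\po}\setminus D'\subseteq\bigcup_N E_{c,N}$ is meager, and since $D'$ is countable $X_{\po}$ is meager in $(0,b)$. Given a nonempty open interval $I$ and an index $N$, I must find $t\in I\setminus D'$ and $n\ge N$ with $n\rho'_n(t)>c$. Induce $f$ on a short interval $[0,\ell)$: the result is an IET with at most $r+1$ exchanged intervals (as recalled above), so its Rokhlin tower has at most $r+1$ columns $Y_i$ of heights $h_i$ with $\sum h_i|Y_i|=b$; the column $Y_{i_0}$ of largest measure has $h_{i_0}|Y_{i_0}|\ge b/(r+1)$, hence $h_{i_0}\ge b/((r+1)\ell)$. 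Pick $x_0\notin D'$ in the middle third of the ``middle'' level $f^{c_0}(Y_{i_0})$, $c_0=\lfloor h_{i_0}/2\rfloor$, and put $n_0=\min(c_0,h_{i_0}-1-c_0)\asymp h_{i_0}$; for $|j|\le n_0$ the point $f^j(x_0)$ sits in the middle third of the disjoint level $f^{c_0+j}(Y_{i_0})$, whose width is $|Y_{i_0}|$ and whose interior avoids $D_0$, so $\rho_{n_0}(x_0)\ge|Y_{i_0}|/3$ and $\Delta_{n_0}(x_0)\ge 2|Y_{i_0}|/3$, whence $n_0\rho'_{n_0}(x_0)\ge n_0|Y_{i_0}|/3\ge c_*$ for an explicit $c_*=c_*(b,r)>0$, while $n_0\to\infty$ as $\ell\to0$. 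Fix $c:=c_*/3$. By minimality (inducing $f$ on $I$; its return times are bounded, so every forward orbit meets $I$ within a fixed number $M_I$ of steps) choose $\ell$ so small that $n_0>2\max(M_I,N)$, take $j\le M_I$ with $f^j(x_0)\in I$, and set $t=f^j(x_0)\in I\setminus D'$, $n=n_0-j>n_0/2\ge N$. Since the orbit segment of $t$ of radius $n$ is contained in that of $x_0$ of radius $n_0$, one gets $\rho'_n(t)\ge\rho'_{n_0}(x_0)\ge c_*/n_0$, so $n\rho'_n(t)>\tfrac12 c_*>c$. Hence $E_{c,N}$ contains no subinterval of $X\setminus D'$, so it is nowhere dense, and the proof is complete.
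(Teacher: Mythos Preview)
Your meagerness argument is correct and is essentially the paper's: both build tall distinct $f$-stacks (the paper via Lemma~\ref{lem:1}, you by inducing on a short $[0,\ell)$), pick a point in the middle third of a middle level, and observe that such a point satisfies $n\rho'_n\ge c_*(b,r)>0$ for some $n$ comparable to the height (this is exactly the content of the paper's Lemma~\ref{lem:psi}). The paper assembles these into a single residual set $Z=\limsup_n Z_n\subset X_{\pgo}$, while you instead show each closed set $E_{c,N}$ misses every subinterval of $X\setminus D'$; the construction is the same, only the bookkeeping differs.

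The measure argument, however, has a genuine gap: the ``uniform separation lemma'' $\limsup_N N\min_{1\le m\le N}\delta_m>0$ is not proved, and you yourself flag the non--uniquely-ergodic case as the obstacle. The paper bypasses this entirely by a much simpler device that you are missing. The \emph{same} tower construction already produces, for each $n$, an open set $Z_n$ with $\lam(Z_n)\ge b/(9r)$ on every point of which $\psi\ge b/(24r)$; hence $\lam(X_{\pgo})\ge\lam(\limsup_n Z_n)\ge b/(9r)>0$. Now one simply notes that $\psi\circ f=\psi$, so $X_{\pgo}$ is an $f$-invariant Borel set of positive Lebesgue measure, and \emph{ergodicity} of $(X,f,\lam)$ forces $\lam(X_{\pgo})=1$. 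No uniform-in-$t$ control of $\Delta_n$ is needed at all. (The paper's proof in Section~\ref{sec:pr:thm:mgg} explicitly invokes $\lam$-ergodicity here---the standing hypothesis of Theorems~\ref{thm:gwm} and~\ref{thm:cwm}---even though Theorem~\ref{thm:mgg} is stated for aperiodic $f$; your reduction to minimal components does not by itself supply $\lam$-ergodicity, so the aperiodic statement still needs a word, but the point stands that your separation lemma is unnecessary for the measure claim.)
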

Theorem \ref{thm:gwm} follows immediately from Theorems \ref{thm:cwm}                  
and \ref{thm:mgg}, the proofs of which are presented in 
Sections~\ref{pr:thm:cwm} and \ref{sec:pr:thm:mgg}, respectively.

\section{Some notation, terminology and lemmas.}\label{sec:def}                       \mbc{sec:def\\3}
The discussion in this section continues under the assumption that $(X,f)$  
is a fixed aperiodic $r$-IET, $X=[0,b)$, $r\geq2$. Note that  $f^{-1}$ (the 
compositional inverse of $f$)  is also an aperiodic $r$-IET on $X$.     

\begin{definition}\label{def:fbasic}
An open subinterval $Y\subset X$ is called $f$-basic if the following                \mbc{def:fbasic\\1}
equivalent conditions are met: 

{\em(b1)} $f|_Y$ is a translation;

{\em(b2)}  $f|_Y$ is continuous;

{\em(b3)} $Y\cap D=\emptyset$.\\
Given an $f$-basic interval  $Y$, we write $\overset{\SSst+}Y(f)$
for the translation constant \ $f|_{_Y}(y)-y$.
\end{definition}
Observe that if an interval $Y\subset X$ is  $f$-basic then  $f(Y)$ is 
an $f^{-1}$-basic interval.
\begin{definition}\label{def:fst}
A sequence $\vec Y=(Y_{k})_{k=1}^{n}$ of subsets of $X$  is called an               \mbc{def:fst\\2}
$f$-stack if the following conditions are met:

{\em(s1)} Each of the sets $Y_{k}$, $1\leq k\leq n-1$, is an
    $f$-basic interval;
    
{\em(s2)} $f(Y_{k})=Y_{k+1}$, for $1\leq k\leq n-1$.\\[2mm]
An $f$-stack $\vec Y=(Y_{k})_{k=1}^{n}$ is called\/ {\em distinct} if
the sets  $Y_k$  are pairwise disjoint.
Given an $f$-stack\, $\vec Y=(Y_{k})_{k=1}^{n}$, we use the following
terminology:\\[1mm]
\h{18}
\begin{tabular}[t]{lll}
\bul\,{\bf The  width \hfill of \ $\vec Y$:} &&   $\omega(\vec Y)=\lam(Y_1)$ 
       (in fact, all\, $\lam(Y_k)$ are equal);\\
\bul\,{\bf The support \hfill of \  $\vec Y$:} &&
       $\supp(\vec Y)={\Tst\bigcup_{k=1}^{n}} Y_{k}\subset X$;\\
\bul\,{\bf The length \hfill of \ $\vec Y$:} && $h(\vec Y)=n$;\\
\bul\,{\bf The measure \hfill of \ $\vec Y$:} && 
    $\lam(\vec Y)=\lam(\supp(\vec Y))$.\\
\end{tabular}\\[1mm]
Note that if \ $\vec Y=(Y_{k})_{k=1}^{n}$ is a distinct $f$-stack,
then  $\lam(\vec Y)=\omega(\vec Y)h(\vec Y)$.
\end{definition}

  Observe that if $(Y_{k})_{k=1}^{n}$ is an $f$-stack then 
the inverted sequence $(Y_{n+1-k})_{k=1}^{n}$ forms an $f^{-1}$-stack;
in particular, the last set $Y_n$ must also be an open subinterval of $X$
(but not necessarily an $f$-basic one).


For $x\in\R$ and $\eps>0$, denote by $B_{\eps}(x)=(x-\eps,x+\eps)$ the 
$\eps$-neighborhood of $x\in \R$. 

\begin{prop}\label{prop:rho1}
Let  $(X,T)$ be an aperiodic IET. For $\eps>0$, $x\in X\!\setminus\! D'$              \mbc{prop:rho1 1}
and $n\geq1$, the following three conditions are equivalent:

{\em(\ref{prop:rho1}a)} The sequence of intervals 
$\Big(B_{\eps}\big(T^{k}(x)\big)\Big)_{k=-n}^{n}$ forms an $f$-stack 
(of length $(2n+1)$).

{\em(\ref{prop:rho1}b)}\, $\eps\leq \rho_{n}(x)$.

{\em(\ref{prop:rho1}c)}\, There exists an $f$-stack  
$(Z_{k})_{k=-n}^{n}$ with $Z_0=B_{\eps}(x)$.
\end{prop}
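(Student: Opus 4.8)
The plan is to prove the equivalence of (\ref{prop:rho1}a), (\ref{prop:rho1}b), (\ref{prop:rho1}c) by a short cyclic argument, unwinding the definitions in Section~\ref{sec:def}. Fix $x\in X\setminus D'$, $\eps>0$ and $n\geq 1$, and write $I_k=B_\eps(T^k(x))$ for $-n\leq k\leq n$.

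\medskip
\noindent\textbf{(\ref{prop:rho1}a) $\Rightarrow$ (\ref{prop:rho1}c).} This is immediate: the sequence $(Z_k)_{k=-n}^n$ with $Z_k=I_k$ is an $f$-stack (by hypothesis) and $Z_0=B_\eps(x)$, so (\ref{prop:rho1}c) holds with this explicit choice.

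\medskip
\noindent\textbf{(\ref{prop:rho1}c) $\Rightarrow$ (\ref{prop:rho1}b).} Suppose $(Z_k)_{k=-n}^n$ is an $f$-stack with $Z_0=B_\eps(x)$. By condition (s2) in Definition~\ref{def:fst} together with the fact that each $Z_k$ with $-n\leq k\leq n-1$ is $f$-basic, hence $f$ restricted to it is a translation, an easy induction in both directions from $k=0$ gives $Z_k=f^k(Z_0)=B_\eps(T^k(x))=I_k$ for all $-n\leq k\leq n$. Now each $Z_k$ with $-n\leq k\leq n-1$ is $f$-basic, i.e.\ $Z_k\cap D=\emptyset$ by (b3); since $Z_k=B_\eps(T^k(x))=(T^k(x)-\eps,T^k(x)+\eps)$, disjointness from $D$ (and from $\{0,b\}=\{d_0,d_r\}$, which holds because $Z_k\subset X=[0,b)$ forces $T^k(x)-\eps\geq 0$ and $T^k(x)+\eps\leq b$) means $\eps\leq|T^k(x)-d_j|$ for all $0\leq j\leq r$, i.e.\ $\eps\leq\rho(T^k(x))$ by \eqref{eq:ra}. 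Taking the minimum over $-n\leq k\leq n-1$ and invoking \eqref{eq:rb} yields $\eps\leq\rho_n(x)$, which is (\ref{prop:rho1}b). (One should double-check that the endpoint constraint at $k=n$ is automatically covered: $Z_n$ is an open subinterval of $X$ as noted after Definition~\ref{def:fst}, so $Z_n\subset[0,b)$ already gives the needed containment for the $k=0$ step when $n\geq 1$; in any case $\rho_n$ only involves $-n\leq k\leq n-1$.)

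\medskip
\noindent\textbf{(\ref{prop:rho1}b) $\Rightarrow$ (\ref{prop:rho1}a).} Assume $\eps\leq\rho_n(x)$. By \eqref{eq:ra}--\eqref{eq:rb}, for each $-n\leq k\leq n-1$ we have $\eps\leq\rho(T^k(x))=\dist(T^k(x),D_0)$, so $I_k=B_\eps(T^k(x))$ meets neither $D$ nor $\{0,b\}$; in particular $I_k\subset X$ and $I_k\cap D=\emptyset$, so $I_k$ is an $f$-basic interval (condition (b3)). Since $f$ is a translation on $I_k$, we get $f(I_k)=B_\eps(f(T^k(x)))=B_\eps(T^{k+1}(x))=I_{k+1}$ for $-n\leq k\leq n-1$. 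This verifies (s1) and (s2) of Definition~\ref{def:fst} for the sequence $(I_k)_{k=-n}^n$, so it is an $f$-stack of length $2n+1$, which is (\ref{prop:rho1}a).

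\medskip
\noindent The argument is essentially bookkeeping with the definitions; the only point requiring a little care — the place I expect to be the mildest obstacle — is handling the boundary points $0$ and $b$ of $X$ uniformly with the honest discontinuities in $D$, i.e.\ making sure that ``$f$-basic'' (which only sees $D$) together with the ambient constraint $I_k\subset X$ recovers exactly the condition $\eps\leq\rho(T^k(x))$ built from $D_0=D\cup\{0,b\}$; this is why the hypothesis $x\notin D'$ (so that the $T^k(x)$, $|k|\leq n$, are genuine interior points and $\rho_n(x)>0$) is comfortable to have, and it is what keeps the intervals $I_k$ honestly inside $X$.
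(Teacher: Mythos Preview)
Your proof is correct and is exactly the argument the paper has in mind: the paper's own proof is a single line, ``Follows from the definition of $\rho_n(x)$ (see \eqref{eq:ra})'', and your cyclic implications (a)$\Rightarrow$(c)$\Rightarrow$(b)$\Rightarrow$(a) simply unwind that definition carefully. The one place where your exposition wobbles is the parenthetical about ``the endpoint constraint at $k=n$'': since $\rho_n$ only ranges over $-n\leq k\leq n-1$ and $I_n=f(I_{n-1})\subset X$ is automatic once $I_{n-1}$ is $f$-basic, there is nothing further to check there, and the remark could be dropped without loss.
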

\begin{proof}
Follows from the definition of $\rho_{n}(x)$  
(see \eqref{eq:ra}).

\end{proof}
\begin{prop}\label{prop:rho2}
Under the assumptions and notations as in Proposition\/ {\em\ref{prop:rho1}},            \mbc{prop:rho2 2}
assume that the equivalent conditions {\em(\ref{prop:rho1}a)}, 
{\em(\ref{prop:rho1}b)} and  {\em(\ref{prop:rho1}c)}  hold. 
Then the following three conditions are equivalent:

{\em(\ref{prop:rho2}a)} The $f$-stack  
    $\Big(B_{\eps}\big(T^{k}(x)\big)\Big)_{k=-n}^{n}$  is distinct;

{\em(\ref{prop:rho2}b)}\,  $\eps\leq \rho'_{n}(x)$.

{\em(\ref{prop:rho2}c)}\, There exists a distinct $f$-stack  
    $(Z_{k})_{k=-n}^{n}$ with $Z_0=B_{\eps}(x)$.
\end{prop}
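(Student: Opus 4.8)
The plan is to deduce Proposition~\ref{prop:rho2} from Proposition~\ref{prop:rho1} together with the elementary geometry of $\eps$-balls. Since the equivalent conditions of Proposition~\ref{prop:rho1} are assumed to hold, we already know that $\big(B_\eps(T^k(x))\big)_{k=-n}^n$ is an $f$-stack and that $\eps\le\rho_n(x)$; what remains is to track the extra requirement of \emph{distinctness}, i.e.\ pairwise disjointness of the $2n+1$ intervals. The key observation is that two open intervals $B_\eps(T^p(x))$ and $B_\eps(T^q(x))$ of common radius $\eps$ are disjoint if and only if $|T^p(x)-T^q(x)|\ge 2\eps$, so the whole stack is distinct exactly when $2\eps\le\min_{|p|,|q|\le n,\,p\ne q}|T^p(x)-T^q(x)|=\Delta_n(x)$, i.e.\ $\eps\le\tfrac12\Delta_n(x)$.

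First I would prove (\ref{prop:rho2}a)$\Leftrightarrow$(\ref{prop:rho2}b). Combining the disjointness criterion above with the already-known bound $\eps\le\rho_n(x)$, distinctness of the canonical stack is equivalent to $\eps\le\min(\rho_n(x),\tfrac12\Delta_n(x))=\rho'_n(x)$ by the definition \eqref{eq:rd}; here the $\rho_n(x)$ part of the minimum is free since it is guaranteed by the standing assumption (\ref{prop:rho1}b). Next, (\ref{prop:rho2}b)$\Rightarrow$(\ref{prop:rho2}c) is immediate: the canonical stack $Z_k=B_\eps(T^k(x))$ itself witnesses (\ref{prop:rho2}c), since it is an $f$-stack by Proposition~\ref{prop:rho1} and distinct by the equivalence just shown. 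Finally, for (\ref{prop:rho2}c)$\Rightarrow$(\ref{prop:rho2}a) I would argue that if $(Z_k)_{k=-n}^n$ is \emph{any} distinct $f$-stack with $Z_0=B_\eps(x)$, then by condition (s2) each $Z_k=f^k(Z_0)$ wherever $f^k$ is defined as a translation on $Z_0$; but (\ref{prop:rho1}b) ensures the orbit stays away from $D_0$ so that $f^k|_{Z_0}=T^k|_{Z_0}$ is indeed this translation, giving $Z_k=B_\eps(T^k(x))$, and hence the canonical stack is distinct because $(Z_k)$ is.

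The only mildly delicate point is making sure the identification $Z_k=B_\eps(T^k(x))$ is legitimate, i.e.\ that once $Z_0=B_\eps(x)$ is prescribed and the sequence is an $f$-stack, all later (and earlier, via the $f^{-1}$-stack remark) intervals are forced to be the translated balls; this is exactly where one invokes that under (\ref{prop:rho1}b) the points $T^k(x)$, $-n\le k\le n-1$, have distance $\ge\eps$ from every discontinuity, so $f$ acts as a genuine translation on each $B_\eps(T^k(x))$ and no splitting occurs. I do not expect any real obstacle here — the proposition is essentially a bookkeeping refinement of Proposition~\ref{prop:rho1}, with the ``$\tfrac12$'' accounting for the two-sided width of an $\eps$-ball — so the proof should be a short paragraph citing the definition \eqref{eq:rc}, \eqref{eq:rd} and the disjointness-of-equal-radius-intervals fact.
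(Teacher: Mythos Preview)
Your argument is correct and is precisely the unpacking of what the paper means: the paper's own proof is the single line ``Follows from the definition of $\rho'_{n}(x)$ (see \eqref{eq:rd}),'' and your proposal spells out exactly that content. There is nothing to add or correct.
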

\begin{proof}
Follows from the definition of $\rho'_{n}(x)$  (see \eqref{eq:rd}).

\end{proof}

The following lemma  will be used in the proof of Theorem \ref{thm:mgg}. 
(Various versions of it are well known).
\begin{lem}\label{lem:1} 
Let  $(X,f)$  be a minimal\, $r$-IET, $X=[0,b)$. Then for every $N$, there exists            \mbc{lem:1 1}
a distinct $f$-stack  $\vec Y$  of length at least  $N$ and of measure at least  
$\frac br$.
\end{lem}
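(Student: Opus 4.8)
The plan is to build the desired distinct $f$-stack by starting from a very thin
Rokhlin-type tower and controlling its measure via minimality. First I would
fix a large $N$ and choose a point $x\in X\setminus D'$ (possible since $D'$ is
countable), so that the quantities $\rho_n(x),\rho'_n(x)$ of \eqref{eqs:r} are
strictly positive for every $n$. Applying Proposition~\ref{prop:rho2} with
$n=N$ and a sufficiently small $\eps\leq\rho'_N(x)$ yields a \emph{distinct}
$f$-stack $(Z_k)_{k=-N}^{N}$ of length $2N+1\geq N$ and width $2\eps$; this
already gives a distinct $f$-stack of length at least $N$, but its measure may
be far below $b/r$. The work is therefore to enlarge the measure without
destroying distinctness or shrinking the length.

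The key step is a standard tower-saturation argument. I would take the distinct
$f$-stack $\vec Y$ of length $\geq N$ that has \emph{maximal measure} among all
such stacks (the supremum is attained, or one takes a stack within $\eps'$ of it
and argues by contradiction). Let $E=\supp(\vec Y)$. If $\lam(E)<b/r$, then the
complement $X\setminus\overline E$ is a nonempty open set, so it contains some
nonempty open subinterval $W$. By minimality of $f$, the forward $f$-orbit of
$W$ is dense, hence meets $E$; equivalently, there is a smallest $m\geq1$ with
$f^m(W')\subset E$ for a suitable subinterval $W'\subset W$ (shrinking $W$ so
that $W',f(W'),\dots,f^{m}(W')$ avoid $D$ except possibly at the ends, using that
$D$ is finite). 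The crucial point is that $m$ can be taken so that this new
column of length $m$ stays disjoint from $E$ before landing in it, and can be
``stacked onto'' $\vec Y$ underneath the appropriate $Y_k$: concretely, one
refines $\vec Y$ by partitioning $Y_1$ into finitely many subintervals according
to first-return behaviour and inserts the new column, producing a distinct
$f$-stack of strictly larger measure and length still $\geq N$ (cutting and
restacking only increases or preserves the number of levels over the relevant
base piece). This contradicts maximality, so $\lam(\vec Y)\geq b/r$ as required.

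The main obstacle is the bookkeeping in the restacking step: one must check that
after cutting $Y_1$ and inserting the return column the resulting sequence is
genuinely an $f$-stack in the sense of Definition~\ref{def:fst} (the inserted
levels are $f$-basic intervals, i.e.\ avoid $D$, and map onto one another), that
it remains \emph{distinct}, and that its length does not drop below $N$. Avoiding
$D$ is handled because $D$ is finite: only finitely many of the intervals
$f^j(W')$ can contain a discontinuity, so after subdividing $W'$ into finitely
many pieces each orbit segment of length $m$ is $f$-basic. Distinctness is
automatic because the new levels live in $X\setminus E$ until the last one, which
lands inside a level of the already-distinct stack $\vec Y$. Alternatively, and
perhaps cleaner to write up, one can bypass maximality entirely: iterate the
enlargement finitely many times, each step adding a definite amount of measure
(bounded below in terms of $\lam(W)$ and the return time, via the Kac/ergodic
bound), until the threshold $b/r$ is crossed; the constant $b/r$ is exactly what
the pigeonhole on the $r$ exchanged intervals $X_k$ guarantees one can always
reach, since some $X_k$ has $\lam(X_k)\geq b/r$ and the Rokhlin tower over a
subinterval of it can be grown to nearly fill $X$.
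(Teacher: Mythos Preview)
Your restacking step has a genuine gap. Suppose $\vec Y=(Y_k)_{k=1}^{h}$ is your maximal-measure distinct $f$-stack with $h\geq N$, and you find $W'\subset X\setminus\supp(\vec Y)$ together with a smallest $m\geq1$ such that $f^m(W')\subset Y_j$ for some~$j$. The only way to combine the new column with the old stack into a \emph{single} distinct $f$-stack is to narrow the base to width $\lam(W')$ and obtain a stack of length at most $m+h-j+1$; but nothing forces
\[
\lam(W')\cdot(m+h-j+1)\;>\;\lam(Y_1)\cdot h,
\]
since the base has shrunk from $\lam(Y_1)$ to $\lam(W')$ and the gain in height need not compensate. ``Partitioning $Y_1$ according to first-return behaviour'' produces a Kakutani skyscraper with several columns, not one $f$-stack, so it does not rescue the maximality argument either. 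The iterative variant you sketch at the end has the same defect: each enlargement may shrink the width, and you give no uniform lower bound on the measure gained per step.

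The paper bypasses all of this and goes directly to the pigeonhole you gesture at in your last sentence, but applied in the right place. Choose any subinterval $Y\subset X$ with $\lam(Y)<\tfrac{b}{rN}$ on which the induced map $g$ is an $s$-IET with $s\leq r$. The $s$ subintervals $Y_1,\ldots,Y_s$ exchanged by $g$ determine $s$ Rokhlin columns $\big(f^n(Y_k)\big)_{n=0}^{n_k-1}$ which, by minimality, partition $X$. Hence some column has measure at least $\tfrac{b}{s}\geq\tfrac{b}{r}$; its width is at most $\lam(Y)<\tfrac{b}{rN}$, so its height exceeds $N$. That single column is already the desired distinct $f$-stack --- no maximisation and no restacking are needed.
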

\noindent {\bf Remark}. The above lemma holds under the weaker assumption that $(X,f)$
is aperiodic (rather than minimal). We do not use the stronger version, and its proof 
is not included.
\begin{proof}[{\bf Proof of Lemma \ref{lem:1}}] Pick a small subinterval $Y\subset X$, 
$0<\lam(Y)<\frac b{rN}$, so that the induced map\,  $g$\,  on\,~$Y$ is an  $s$-IET, 
with some  $2\leq s\leq r$.  
Let  $Y_k\subset Y$,  $1\leq k\leq s$,  be the subintervals of  $Y$  exchanged 
by  $g$:\,     $g(Y_k)=f^{n_{k}}(Y_k).$
By minimality, the images $f^n(Y_k)$ cover $X=[0,b)$ before returning to $Y$.

More precisely,  the family of subintervals
$\big\{f^n(Y_k)\,\big|\,1\leq k\leq s,\ 0\leq n\leq n_k \big\}$
partitions the interval\,  \mbox{$X=[0,b)$}. 
Thus\, $\lam\Big(\bigcup_{n=0}^{n_k} f^n(Y_k)\Big)\geq\tfrac bs\geq\tfrac br$,\,
for some  $k\in[1,s]$.
To satisfy the conditions of the Lemma \ref{lem:1}, one takes  
$\vec Y=\big(f^{n}(Y_{k})\big)_{n=0}^{n_{k}}$.\v{-3}

\end{proof}       
\v{-1}
We would need the following notation. For an open finite interval  $Y$, denote by
$\Theta(Y)$  the middle third subinterval of $Y$ defined as the interval with the same
center but\, $3$\, times shorter: 
\begin{equation}\label{eq:Theta}
\Theta\big(B_{\eps}(x)\big)=B_{\eps/3}(x); 
\h5 \Theta\big((a,b)\big)=\big(\tfrac{2a+b}3,\tfrac{a+2b}3\big); \h5
\lam(\Theta(Y))=\tfrac13\lam(Y).
\end{equation}¥

\section{Proof of Theorem \ref{thm:mgg}}\label{sec:pr:thm:mgg}
One easily validates $f$-invariance of $\psi$:  $\psi(x)=\psi(f(x))$,             \mbc{sec:pr:thm:mgg\\3}
So the Borel $f$-invariant set  $X_{\pgo}$  must have Lebesgue measure 
either $0$ or $1$,  in view of the ergodicity of $f$.

Since $(X,f)$ is an ergodic IETs, it is minimal, so Lemma \ref{lem:1} applies. It 
follows that there exists a sequence $(\vec Y_{n})_{n\geq1}$  of distinct $f$-stacks 
with lengths $h(n)\colon\!\!=h(\vec Y_{n})$ approaching infinity and measures  
$\lam(\vec Y_{n})\geq \frac br$ (see Definition \ref{def:fst}).   Let
\[
\vec Y_n=\big(Y_{n,k}\big)_{k=1}^{h(n)}=(Y_{n,1}, Y_{n,2}, \ldots, Y_{n,h(n)}), 
     \qquad n\in \N.
\]
We may assume that all\, $h(n)\geq6$. Let $p(n)=\Big[\frac{h(n)}3\Big]$ and  
$q(n)=h(n)-p(n)+1$.

Consider the following sequence of distinct $f$-stacks $(\vec Z_{n})_{n\geq1}$:
\[
\vec Z_n=\Big(\Theta\big(Y_{n,k}\big)\Big)_{k=p(n)+1}^{q(n)-1}=
\Big(\Theta\big(Y_{n,p(n)+1}\big), \Theta\big(Y_{n,p(n)+2}\big), \ldots, 
\Theta\big(Y_{n,q(n)-1}\big)\Big), \qquad n\in \N,
\]
and set  
\[
Z_n=\supp(\vec Z_n)\subset X,  \qquad n\in \N,
\]
(see Definition \ref{def:fst} for notation) and
\[
Z = \bigcap_{n=1}^{\infty}\bigg(\bigcup_{k=n}^{\infty} Z_k\bigg)=
   \{z\in X\mid z\in Z_n, 
   \text{ for infinitely many }n\geq1\}.
\]

The set $Z$  is clearly a residual subset of $X$. (In fact, it is a dense
$G_{\delta}$ subset of $X$).\v2

Observe the following inequalities for the length and the width of $\vec Z_n$:
\[
h(\vec Z_n)=q(n)-p(n)-1\geq \tfrac{h(n)}3,  \quad 
\omega(\vec Z_n)=\tfrac13\omega(\vec Y_n)  \qquad (n\in \N).
\]
It follows that
\[
  \lam(Z_n)=\lam(\vec Z_n)\geq \tfrac19 \lam(\vec Y_n)\geq\tfrac b{9r}
   \qquad (n\in \N),
\]
and therefore
\[
   \lam(Z)\geq \limsup_{n\to\infty} \lam(\vec Z_n)\geq\tfrac b{9r}.
\]

\begin{lem}\label{lem:psi}
   $\psi(z)>0$  for all  $z\in Z$.
\end{lem}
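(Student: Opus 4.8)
The plan is to use the distinct $f$-stacks $\vec Z_n$ directly. Given $z\in Z$, then $z\in Z_n$ for infinitely many $n$, and for each such $n$ I will build, around the point $z$, a distinct $f$-stack of half-length $p(n)$ and width $\omega(\vec Z_n)$; Propositions \ref{prop:rho1} and \ref{prop:rho2} then convert this into the estimate $\rho'_{p(n)}(z)\ge\omega(\vec Z_n)$, and the product $p(n)\,\omega(\vec Z_n)$ turns out to be bounded below by a positive constant depending only on $b$ and $r$.

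So fix $z\in Z$ and let $n$ be one of the infinitely many indices with $z\in Z_n$, say $z\in\Theta(Y_{n,i_0})$ with $p(n)+1\le i_0\le q(n)-1=h(n)-p(n)$. Since $\vec Y_n$ is a distinct $f$-stack, $f$ restricts to a translation on each level $Y_{n,i}$ with $i\le h(n)-1$ (and $f^{-1}$ to a translation on $Y_{n,i}$ with $i\ge 2$), and a translation carries the middle third of an interval onto the middle third of its image; iterating, $f^{j}$ carries $\Theta(Y_{n,i_0})$ onto $\Theta(Y_{n,i_0+j})$ whenever $-(i_0-1)\le j\le h(n)-i_0$, hence for all $|j|\le p(n)$, since $i_0$ lies in the central range. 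In particular $f^{j}(z)\in\Theta(Y_{n,i_0+j})\subset Y_{n,i_0+j}$ for $|j|\le p(n)$. With $\eps:=\omega(\vec Z_n)=\tfrac13\,\omega(\vec Y_n)$ this gives $B_\eps\!\big(f^{j}(z)\big)\subset Y_{n,i_0+j}$ for $|j|\le p(n)$; the levels $Y_{n,i_0+j}$ are pairwise disjoint (distinct levels of the distinct stack $\vec Y_n$), and for $|j|\le p(n)-1$ they satisfy $i_0+j\le h(n)-1$, hence are $f$-basic and disjoint from $D$. Therefore $\big(B_\eps(f^{j}(z))\big)_{j=-p(n)}^{p(n)}$ is a distinct $f$-stack whose central level is $B_\eps(z)$.

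This already shows, letting $n$ vary, that $f^{j}(z)\notin D$ for every $j\in\Z$ (pick an admissible $n$ with $p(n)-1\ge|j|$), so $z\notin D'$ and Propositions \ref{prop:rho1} and \ref{prop:rho2} apply with $x=z$ and $m=p(n)$; the distinct $f$-stack just constructed then yields $\eps\le\rho'_{p(n)}(z)$. Finally $p(n)\ge h(n)/6$ (recall $h(n)\ge 6$) and $\omega(\vec Y_n)=\lam(\vec Y_n)/h(n)\ge b/(r\,h(n))$, whence $p(n)\,\rho'_{p(n)}(z)\ge p(n)\,\eps\ge\tfrac{b}{18r}$. Since $z\in Z_n$ for infinitely many $n$ and $p(n)\to\infty$ along them, $\psi(z)=\limsup_{m\to\infty}m\rho'_m(z)\ge\tfrac{b}{18r}>0$.

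The step needing the most care is the middle-third bookkeeping in the second paragraph: one must check that $f^{j}$ really transports $\Theta(Y_{n,i_0})$ onto $\Theta(Y_{n,i_0+j})$ throughout the window $|j|\le p(n)$ — which is exactly why the bottom and top $p(n)$ levels of $\vec Y_n$ were trimmed off when forming $\vec Z_n$, so that every intermediate level actually used is $f$-basic — and that the resulting balls genuinely form a distinct $f$-stack in the sense of Definition \ref{def:fst}, so that Proposition \ref{prop:rho2} may legitimately be invoked. Everything else is a routine estimate.
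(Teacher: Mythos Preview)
Your proof is correct and follows essentially the same route as the paper's: both place $z$ in a middle-third level of a large distinct $f$-stack $\vec Y_n$, build around $z$ a distinct $f$-stack of length $2p(n)+1$ contained in the levels of $\vec Y_n$, invoke Proposition~\ref{prop:rho2} to bound $\rho'_{p(n)}(z)$ below, and estimate the product $p(n)\,\rho'_{p(n)}(z)$ via $\lam(\vec Y_n)\ge b/r$. Your version is in fact slightly sharper (you take $\eps=\omega(\vec Z_n)=\tfrac13\omega(\vec Y_n)$ rather than the paper's $\tfrac16\omega(\vec Y_n)$, yielding $b/(18r)$ instead of $b/(24r)$) and more explicit about the hypothesis $z\notin D'$ needed for Proposition~\ref{prop:rho2}; the one minor slip is that the $f$-basic check should cover $-p(n)\le j\le p(n)-1$ rather than $|j|\le p(n)-1$, but the missing case $j=-p(n)$ goes through identically since $i_0-p(n)\ge 1$.
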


Since the function $\psi$ is easily seen to be Borel measurable and $f$-invariant  
($\psi(x)=\psi(f(x))$, for all $x\in X$), the ergodicity of $f$ implies that 
$\lam(X_{\pgo})\in\{0,1\}$. The proof of Lemma \ref{lem:psi} would imply that  
$\lam(X_{\pgo})=1$, completing the proof of Theorem  \ref{thm:mgg}.

\begin{proof}[\bf Proof of Lemma \ref{lem:psi}] Let $z\in Z$. Then there exists an 
increasing sequence  of positive integers  $(n_i)_{i=1}^{\infty}$  such that  
$z\in Z_{n_i}$ where
\v{-5}
\[
 Z_{n_i}=\ \supp(\vec Z_{n_i})=\h{-3}\bigcup_{k=p(n_i)+1}^{q(n_i)-1} 
   \Theta(Y_{n_i,k})\ \subset\ \supp(\vec Y_{n_i})=\bigcup_{k=1}^{h(n_{i})} \ 
   Y_{n_i,k}, \h6 \text{for all }\, i\geq1.
\]
Set\, $\eps_{i}\colon\!\!=\tfrac12\omega(\vec Z_{n_i})=\tfrac16\omega(\vec Y_{n_i})$.
Note that for every $i\geq1$  the sequence\,
\[
\vec B_i\colon\!\!=\Big(f^k\big(B_{\eps_{i}}(z)\big)\Big)_{k=-p(n_i)}^{p(n_i)}
\]
forms a distinct $f$-stack due to the fact that  $\vec Y_{n_i}$  does.
By Proposition \ref{prop:rho2}, \ $\eps_{i}\leq\rho'_{p(n_i)}(z)$.

Since $\Dst\lim_{i\to\infty} n_i=\infty$,  we get both 
$\Dst\lim_{i\to\infty} h(n_i)=\infty$ and $\Dst\lim_{i\to\infty} p(n_i)=\infty$. 
One concludes that for all large $i$\,:
\begin{align*}
\rho'_{p(n_{i})}(z)\cdot p(n_i)\geq
   \eps_{i}\cdot h(n_i)\cdot\tfrac{p(n_i)}{h(n_i)}=
   \tfrac16\cdot \lam(\vec Y_{n_i})\cdot\tfrac{p(n_i)}{h(n_i)}> 
   \tfrac16 \cdot\tfrac br\cdot \tfrac14=\tfrac b{24r}.
\end{align*}
The proof of Lemma \ref{lem:psi} (and hence of Theorem \ref{thm:mgg}) is 
completed by  direct estimation (see~\eqref{eq:psi}): 
\[
  \psi(z)=\limsup_{n\to\infty}\limits\,n\rho'(n)\geq \tfrac b{24r}>0.
\]
\end{proof}

\section{Proof of Theorem \ref{thm:cwm}} \label{pr:thm:cwm}
Denote by $S^{1}=\{z\in\C\mid |z|=1\}=\{e^{it}\mid t\in[0,2\pi)\}$\,                 \mbc{pr:thm:cwm\\2}
the unit circle in the complex plane. 

Theorem \ref{thm:cwm} is derived from the following proposition. 
\begin{prop}\label{prop:F}
Let $(X,f)$ be an aperiodic IET, $X=[0,b)$. Let $1\neq\theta\in S^{1}$.              \mbc{prop:F 3}
Assume that\, $\psi(t)>0$, for some $t\in(0,b)$.
Then the equation
\begin{equation}\label{eq:F}
F(f(x))=
\begin{cases}
\theta\cdot F(x) & \text{if }\, x<t\\
F(x)  & \text{if }\, x\geq t
\end{cases}
\end{equation}¥
has no (Lebesgue) measurable solutions $F\colon X\to S^{1}$.
\end{prop}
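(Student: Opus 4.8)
The plan is to show that a measurable solution $F\colon X\to S^1$ of \eqref{eq:F} cannot oscillate fast enough to be consistent with the tall, thin $f$-stacks guaranteed by the hypothesis $\psi(t)>0$. The starting observation is that along an $f$-orbit the value of $F$ is multiplied by $\theta$ each time the point lands in $X_t=[0,t)$ and is unchanged otherwise; so if $x\in X\setminus D'$ and $(B_\eps(f^k(x)))_{k=-n}^{n}$ is a distinct $f$-stack (Proposition \ref{prop:rho2}), then $F$ restricted to this orbit segment is determined by the single value $F(x)$ together with the combinatorial ``cocycle'' recording which of the $2n+1$ levels lie in $X_t$. First I would reformulate \eqref{eq:F} as a coboundary-type identity: writing $\chi=\mathbf 1_{X_t}$ and defining the Birkhoff-type sum $S_m(x)=\sum_{j=0}^{m-1}\chi(f^j(x))$, the functional equation gives $F(f^m(x))=\theta^{S_m(x)}F(x)$ for all $m\ge 0$ (and a similar formula for $m<0$). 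Thus $F$ is an eigenfunction-like object for the skew product, and proving nonexistence of measurable $F$ is proving that $\theta$ is not a ``measurable eigenvalue'' of this cocycle.

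The core of the argument is a Fubini/mean-value estimate on one tall distinct $f$-stack. Fix $z=t$ (or, more precisely, work with the point $t$ directly, using $\psi(t)>0$): by definition of $\psi$, there is a sequence $n_i\to\infty$ with $\rho'_{n_i}(t)\cdot n_i\ge c>0$, so by Proposition \ref{prop:rho2} the intervals $\vec B_i=(f^k(B_{\eps_i}(t)))_{k=-n_i}^{n_i}$ with $\eps_i=\rho'_{n_i}(t)\asymp c/n_i$ form distinct $f$-stacks of width $2\eps_i$ and length $2n_i+1$, hence of total measure bounded below by roughly $2c$. On such a stack, for $\lam$-a.e. base point $y\in B_{\eps_i}(t)$ one has $F(f^k(y))=\theta^{S_k(y)}F(y)$. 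The key point is that the discontinuity of the cocycle $\chi=\mathbf 1_{X_t}$ sits exactly at the endpoint $t$ of the inducing interval, which is the center of the base interval $B_{\eps_i}(t)$: so as $y$ ranges over the base, the partial sums $S_k(y)$ take at most two distinct values for each $k$ (they jump by one exactly once, at the $k$ for which $f^k(y)$ crosses $t$), and in fact the whole stack splits into two sub-stacks — levels sitting over $[0,t)$ and levels over $[t,b)$ — glued along the fiber over $t$. I would compare $F$ on a level of one sub-stack with $F$ on the corresponding level of the other; the functional equation forces their ratio to be $\theta$ or $1$ in a controlled pattern, while measurability of $F$ forces, via a Lebesgue-density / Lusin argument, that $F$ is nearly constant on a positive fraction of the levels. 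Pushing these two facts against each other on a stack of length $\to\infty$ yields $\theta=1$, a contradiction.

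More concretely, the standard device I expect to use is the following: by Lusin's theorem choose a compact $K\subset X$ with $\lam(X\setminus K)$ tiny on which $F$ is uniformly continuous. Since the distinct $f$-stacks $\vec B_i$ have measure bounded below while their widths shrink, a counting argument (the stack is a disjoint union of $2n_i+1$ translates of the base interval, most of which must meet $K$ heavily) produces many indices $k$ for which $F(f^k(y))$ is within $\delta$ of a constant for $y$ in a large-measure subset of the base. Combining $F(f^{k}(y))\approx F(f^{k'}(y))$ for two such indices $k,k'$ with $F(f^{k}(y))=\theta^{S_{k}(y)-S_{k'}(y)}F(f^{k'}(y))$ gives $\theta^{S_k(y)-S_{k'}(y)}\approx 1$; and because $t$ bisects the base interval, one can select $k,k'$ with $S_k(y)-S_{k'}(y)$ equal to a prescribed value (e.g. $\pm1$), forcing $\theta\approx 1$, hence $\theta=1$. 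The measure lower bound $\lam(\vec B_i)\ge$ const, inherited from Lemma \ref{lem:1} through Theorem \ref{thm:mgg}'s construction (or directly from $\psi(t)>0$), is what makes the density argument survive as $i\to\infty$.

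The main obstacle, I expect, is the bookkeeping that makes ``$t$ bisects the base interval'' do real work: one must ensure that among the roughly $n_i$ levels over $[0,t)$ and the roughly $n_i$ levels over $[t,b)$ there genuinely are pairs $f^k(B_{\eps_i}(t))$, $f^{k'}(B_{\eps_i}(t))$ that are close in $X$ (so that uniform continuity of $F$ on $K$ is usable) \emph{and} whose cocycle difference $S_k-S_{k'}$ is a nonzero integer that can be made small and nonvanishing independently of $i$. This is a combinatorial statement about the return structure of the stack relative to $t$, and handling it cleanly — rather than by ad hoc case analysis — is where the care goes; everything else (Lusin, Fubini on the stack, the final contradiction $\theta=1$) is routine once that selection is in place.
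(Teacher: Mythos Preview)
Your overall strategy matches the paper's: build distinct $f$-stacks $\big(B_{\eps_n}(f^k(t))\big)_{k=-n}^{n}$ centered at $t$ via Proposition~\ref{prop:rho2}, exploit that only the central level $k=0$ straddles $t$, and derive $\theta=1$ from the fact that $F$ is essentially constant on each small level. Where you diverge is in the mechanism for ``essentially constant'': you invoke Lusin's theorem, while the paper averages $F$ over each level and proves a clean lemma (Lemma~\ref{lem:mod1}) showing these averages have modulus tending to $1$. The averaging route is tidier because it needs no selection of good levels or good base points: once $t$ avoids every level of the (half-)stack, the variations $\B(Y_{n,1})=\cdots=\B(Y_{n,h_n})$ are all equal, and since their sum is $o(1)$ each one is $o(1/h_n)$. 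The paper then splits each level $A_n^k$ into its right half $B_n^k=(f^k(t),f^k(t)+\eps_n)$ and left half $C_n^k=(f^k(t)-\eps_n,f^k(t))$, shows that the six limiting averages $\alpha^i,\beta^i,\gamma^i$ ($i=0,1$) all lie on $S^1$, and reads off $\beta^1=\beta^0$, $\gamma^1=\theta\gamma^0$ directly from \eqref{eq:F}; together with $\alpha^i=\tfrac12(\beta^i+\gamma^i)\in S^1$ this forces $\beta^i=\gamma^i$ and hence $\theta=1$.

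Your ``main obstacle'' paragraph, however, misidentifies the combinatorial point, and as written that step would not go through. You do \emph{not} need levels $f^k(B_{\eps_i}(t))$ and $f^{k'}(B_{\eps_i}(t))$ that are close in $X$, nor do you need to arrange $S_k(y)-S_{k'}(y)=\pm1$: that difference counts visits of the orbit segment to $[0,t)$ and is typically large and uncontrolled. The correct (and automatic) fact is that for any $k\ge 1$ and any $y_L\in(t-\eps_n,t)$, $y_R\in(t,t+\eps_n)$ one has $S_k(y_L)-S_k(y_R)=1$, because the only index $0\le j<k$ with $\chi(f^j(y_L))\neq\chi(f^j(y_R))$ is $j=0$; likewise the analogous difference vanishes for $k'\le 0$. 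So the comparison is between the \emph{left and right halves of the same level}, not between two distant levels. With this correction your Lusin argument does work (pick $y_L,y_R$ and a single $k\ge1$, $k'\le 0$ so that all four images $f^k(y_L),f^k(y_R),f^{k'}(y_L),f^{k'}(y_R)$ land in $K$; a Fubini count on a stack of measure bounded below supplies these), and the contradiction $\theta\approx 1$ follows exactly as in the paper.
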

\begin{proof}[\bf Proof of Theorem \ref{thm:cwm}]
Since $(X,f)$ is ergodic, all induced maps $(X_{t},f_{t})$ also are. If some IET 
$(X_{t},f_{t})$  fails to be weakly mixing, it has a nontrivial eigenvalue  
$\theta\in S^{1}$, $\theta\neq 1$. Select an eigenfunction $G\colon X_{t}\to\C$ 
corresponding to $\theta$ so that $G(f(x))=\theta\cdot G(x)$, for $x\in X_{t}$.
The ergodicity of $f_{t}$ implies that  $|G|$  must be a constant which 
(without loss of generality) is assumed to be $1$. Thus $G(X_{t})\subset S^{1}$.

Let  $g=f^{-1}$  be the compositional inverse of $f$. Define  $F(x)=G(g^{k(x)}(x))$  
with  $k(x)=\min \big(N_{t}(x)\big)$ where the set\,  
$\N_{t}(x)\colon=\{k\geq 0\mid g^{k}(x)\in X_{t}\}$  is not empty because both 
$f$, $f^{-1}$  are ergodic and hence minimal.
The constructed function  $F\colon X\to S^{1}$  is measurable (because $G$ is) and 
is easily seen to satisfy \eqref{eq:F}. This contradicts the conclusion of 
Proposition \ref{prop:F}, completing the proof of Theorem \ref{thm:cwm}.

\end{proof}
\section{Proof of Proposition \ref{prop:F}}
The proof goes by contradiction. Assume to the contrary that there are $\theta$ and 
$F$ satisfying the conditions of Proposition \ref{prop:F} and that, in particular, 
\eqref{eq:F} holds for some  $t\in(0,b)$ such that  
\[
\psi(t)=\limsup_{n\to\infty}\limits\, n\rho'_{n}(t)>0.
\]
Set $\eps_k=\frac{\psi(t)}{2k}$, for $k\geq1$.
Then there exists an infinite subset\,  $\M\subset \N$\, 
of natural numbers such that\,  $\rho'_n(t)>\eps_n>0$\,  for all\, $n\in \M$.

The statement of the following lemma follows from Proposition \ref{prop:rho2}.
\begin{lem}\label{lem:ds}
The sequence $\big(B_{\eps_n}(f^k(t))\big)_{k=-n}^n$ forms a                            \mbc{lem:ds\\3}          
distinct $f$-stack for every integer $n\in\M$.
\end{lem}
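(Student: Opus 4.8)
The plan is to read Lemma~\ref{lem:ds} off directly from Propositions~\ref{prop:rho1} and~\ref{prop:rho2}, applied at the point $x=t$ with radius $\eps=\eps_n$ and with the ambient IET being $(X,f)$ itself. Only one preliminary remark is needed to make those propositions applicable, namely that $t\in X\setminus D'$: indeed, for any $x\in D'$ the quantities $\rho'_n(x)$ vanish for all large $n$ (as observed just after \eqref{eqs:r}), so $\psi(x)=\limsup_n n\rho'_n(x)=0$; hence the standing hypothesis $\psi(t)>0$ forces $t\notin D'$.

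Now fix $n\in\M$. By the defining property of $\M$ we have $\rho'_n(t)>\eps_n>0$, and since $\rho'_n(t)=\min\big(\rho_n(t),\tfrac12\Delta_n(t)\big)\le\rho_n(t)$ by \eqref{eq:rd}, also $\eps_n<\rho_n(t)$. First I would apply Proposition~\ref{prop:rho1} (with $x=t$, $\eps=\eps_n$): the inequality $\eps_n\le\rho_n(t)$ is condition (\ref{prop:rho1}b), so by its equivalence with (\ref{prop:rho1}a) the sequence $\big(B_{\eps_n}(f^k(t))\big)_{k=-n}^n$ is an $f$-stack (of length $2n+1$). Since these equivalent conditions now hold, Proposition~\ref{prop:rho2} is available; the inequality $\eps_n\le\rho'_n(t)$ is its condition (\ref{prop:rho2}b), so by the equivalence with (\ref{prop:rho2}a) this $f$-stack is moreover distinct. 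As $n\in\M$ was arbitrary, the lemma follows.

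There is essentially no obstacle here: the combinatorial content — that controlling $\rho_n$ and $\rho'_n$ at a point is the same as having a (distinct) tower of $\eps$-balls over its orbit — has already been isolated in Propositions~\ref{prop:rho1}--\ref{prop:rho2} in Section~\ref{sec:def}. The only points requiring a word of care are the passage from the strict inequalities $\rho'_n(t)>\eps_n$ supplied by the definition of $\M$ to the non-strict hypotheses $\eps_n\le\rho'_n(t)\le\rho_n(t)$ needed in the propositions (harmless), and the verification, made above, that $t$ indeed lies in the set $X\setminus D'$ on which those propositions are phrased.
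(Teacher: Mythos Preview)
Your proof is correct and follows exactly the approach indicated in the paper, which simply remarks that the lemma ``follows from Proposition~\ref{prop:rho2}.'' Your version merely spells out the details (the verification that $t\notin D'$ and the passage through Proposition~\ref{prop:rho1}) that the paper leaves implicit.
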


\subsection*{Some notation} 
For $n\in \M$ and $|k|\leq n$, set the following open subintervals of $X$:
\begin{subequations}
\begin{align*}
A_n^k&=(f^k(t)-\eps_n,f^k(t)+\eps_n)=B_{\eps_n}(f^k(t));\\
B_n^k&=(f^k(t),f^k(t)+\eps_n);\\
C_n^k&=(f^k(t)-\eps_n, f^k(t)).
\end{align*}
\end{subequations}

Set the constants (all lying in $D_1=\big\{z\in \C\ \big|\ |z|\leq1\big\}$):
\begin{equation}\label{eq:alpkn}
\alp^k_n=\A(A^k_n), \h9  \bet^k_n=\A(B^k_n), \h9 \gam^k_n=\A(C^k_n),
\end{equation}
where
\begin{equation}\label{eq:aver}
\A(Y)=\!\tfrac1{\lam(Y)}\int_{Y} F(x)\,dx
\end{equation}
stands for the average of the function $F$ over a subinterval $Y\subset X$. 

Since  $F$ is $S^1$-valued, the constants in \eqref{eq:alpkn}
lie in the unit disc\, $D_1=\big\{z\in\C\ \big|\ |z|\leq1\big\}$.

By a passing to an infinite subset of $\M$, 
we may assume that the following six limits 
\begin{equation}\label{eq:alpi}
\alp^i=\!\lim_{\substack{n\in \M\\ n\to\infty}} \alp^{i}_{n}, \h5
\bet^i=\!\lim_{\substack{n\in \M\\ n\to\infty}} \bet^{i}_{n}, \h5
\gam^i=\!\lim_{\substack{n\in \M\\ n\to\infty}} \gam^{i}_{n},  \h9
i\in\{0,1\}
\end{equation}
\v{-1.5}
\ni exist and lie in $D_1$. We show that in fact all six constants
$\alp^{1},\alp^{0},\bet^{1},\bet^{0},\gam^{1}$ and $\gam^{0}$
must lie in the unit circle $S^1=\partial D_1$  (see Lemma \ref{lem:sixcon} below).
This will follow from Lemma~\ref{lem:mod1} below.

Recall that, given an $f$-stack $\vec Y$, we write $h(\vec Y)$ and\, $\lam(\vec Y)$
for the length and the measure of $\vec Y$ (see Definition~{\ref{def:fst}}).

\begin{lem}\label{lem:mod1}
Let $(X,f)$ be an aperiodic IET, $X=[0,b)$. Let\, $\theta\in S^1$ and\, $t\in(a,b)$.          
Assume that a measurable function $F\colon X\to S^1$ satisfies the equation             \mbc{lem:mod1 4}
\eqref{eq:F}. Let \ $(\vec Y_n)_{n=1}^{\infty}$  be a sequence of distinct 
$f$-stacks
\[
\vec Y_n=\big(Y_{n,k}\big)_{k=1}^{h_{n}}=(Y_{n,1},Y_{n,2},\ldots,Y_{n,h_{n}}),
\]
satisfying the following two conditions:

\h5{\em(\ref{lem:mod1}A)} $\lim_{n\to\infty}\limits h_n=\infty$  (the lengths of 
stacks  $h_n=h(\vec Y_n)$ approach infinity);
\v{-1}

\h5{\em(\ref{lem:mod1}B)} $\liminf_{n\to\infty}\limits\, \lam(\vec Y_n)>0$
          (the measures of stacks\,  
          $\lam(\vec Y_n)=\!\sum_{k=1}^{h(n)}\limits \lam(Y_{n,k})$\, 
          stay away from $0$);
      
\h5{\em(\ref{lem:mod1}C)}\, $t\notin Y_{n,k}$, for all\, $n$\, and\, $k\in[1, h_{n}]$.
\v2
\ni Then \ $\lim_{n\to\infty}\limits|\A(Y_{n,1})|=1$  (for notation see \eqref{eq:aver}).
\end{lem}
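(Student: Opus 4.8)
The plan is to exploit the fact that $F$ is a genuine function (not just an $L^2$ class), so that by the Lebesgue density theorem, $\A(Y)$ is close to a unimodular value whenever $Y$ is a small interval centered near a density point of $F$. Since the stacks $\vec Y_n$ are distinct and have measure bounded below by some $c>0$ while their lengths go to infinity, the width $\omega(\vec Y_n)=\lam(Y_{n,1})$ satisfies $\omega(\vec Y_n)\le \lam(\vec Y_n)/h_n \le b/h_n \to 0$. So the base intervals shrink. The first step is to observe that on the support of $\vec Y_n$, iterating $f$ acts as a sequence of translations composed with multiplication by $\theta$ or $1$ (according to \eqref{eq:F}); in particular, because $t\notin Y_{n,k}$ for any $k$ by condition (\ref{lem:mod1}C), each $Y_{n,k}$ lies entirely on one side of $t$, and $F\restriction Y_{n,k+1}$ equals $F\restriction Y_{n,k}$ composed with the inverse translation, multiplied by a unimodular constant $\zeta_{n,k}\in\{1,\theta\}$. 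Hence $|\A(Y_{n,k})|$ is independent of $k$: $|\A(Y_{n,k})|=|\A(Y_{n,1})|$ for all $k\in[1,h_n]$.

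The second step is the main point. Consider $\sum_{k=1}^{h_n}\int_{Y_{n,k}}F$. By the previous paragraph each summand has the form (unimodular constant) $\times$ (translate of $\int_{Y_{n,1}}F$), so $\big|\int_{Y_{n,k}}F\big| = \omega(\vec Y_n)\,|\A(Y_{n,1})|$ for every $k$. On the other hand, $\sum_{k=1}^{h_n}\int_{Y_{n,k}}F = \int_{\supp(\vec Y_n)}F$, whose modulus is at most $\lam(\vec Y_n)$ trivially, but that is the wrong direction. Instead I want a lower bound. Here is the mechanism: since $|F|\equiv 1$, we have $\int_{Y_{n,1}}|F|^2 = \omega(\vec Y_n)$, whereas $\big|\int_{Y_{n,1}}F\big|^2 = \omega(\vec Y_n)^2 |\A(Y_{n,1})|^2$. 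So
\[
\int_{Y_{n,1}}\big|F(x)-\A(Y_{n,1})\big|^2\,dx = \omega(\vec Y_n)\big(1-|\A(Y_{n,1})|^2\big).
\]
Summing the analogous identity over all $k$ (using that each $Y_{n,k}$ is a translate of $Y_{n,1}$ on which $F$ is a unimodular constant times a translate of $F\restriction Y_{n,1}$), the left-hand side becomes $\int_{\supp(\vec Y_n)} |F - (\text{locally constant unimodular function})|^2 \le 4\lam(\vec Y_n)\le 4b$, while the right-hand side is $h_n\,\omega(\vec Y_n)\big(1-|\A(Y_{n,1})|^2\big) = \lam(\vec Y_n)\big(1-|\A(Y_{n,1})|^2\big)$. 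Wait — that only gives $1-|\A(Y_{n,1})|^2 \le 4b/\lam(\vec Y_n)$, a bound that does not go to $0$. So this crude route is insufficient; I must instead use the \emph{vanishing oscillation} of $F$ across the stack.

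The correct third step uses that $F(f(x)) = \zeta(x)F(x)$ with $\zeta(x)\in\{1,\theta\}$ depending only on which side of $t$ the point $x$ lies. Because the stack is distinct and tall, consecutive levels $Y_{n,k},Y_{n,k+1}$ are related by $F\restriction Y_{n,k+1} = \zeta_{n,k}\cdot (F\restriction Y_{n,k})\circ \tau^{-1}$ for a translation $\tau$. The key is that the partial products $\prod_{j=1}^{k}\zeta_{n,j}$, suitably compared with an approximate eigenvalue for $f_t$, force $\A(Y_{n,k})$ to rotate in a controlled way; if $|\A(Y_{n,1})|$ were bounded away from $1$, then the $L^2$ defect $\omega(\vec Y_n)(1-|\A(Y_{n,1})|^2)$ is a fixed positive proportion of $\omega(\vec Y_n)$ \emph{at every level}, and since there are $h_n\to\infty$ disjoint levels each of width $\omega(\vec Y_n)$, the total defect $\int_{\supp(\vec Y_n)}|F-\text{const}_k|^2$ relative to the measure $\lam(\vec Y_n)$ of the support stays bounded below — but this has to be reconciled with the fact that, after dividing by the locally constant unimodular factors, $F$ restricted to the union of levels becomes, on a set of measure $\ge c$, a single function $F\restriction Y_{n,1}$ replicated; integrating $|F\restriction Y_{n,1}-\A(Y_{n,1})|^2$ is already $\omega(\vec Y_n)(1-|\A(Y_{n,1})|^2)$, and there is no contradiction yet. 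The genuine resolution — which I expect to be the hard part — is a \emph{telescoping} argument: the average over the whole stack, $\frac{1}{\lam(\vec Y_n)}\int_{\supp(\vec Y_n)}F$, can be computed two ways, and its modulus is controlled by a geometric/Weyl-sum cancellation in $\theta^{(\#\{j\le k: Y_{n,j}\subset X_t\})}$ that is $O(1/h_n)$ unless $|\A(Y_{n,1})|$ is forced near $1$; meanwhile Lebesgue density at a.e. point of the base $Y_{n,1}$ (after passing to a subsequence localizing the base intervals near a density point $x_0$ of $F$, using $\omega(\vec Y_n)\to 0$) gives $\A(Y_{n,1}) \to F(x_0) \in S^1$ directly. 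I would combine: pick a density point $x_0$ of $F$; for each $n$ the base interval $Y_{n,1}$ has width $\to 0$ but is not centered at $x_0$, so instead average the density statement over $x_0$ ranging in $Y_{n,1}$: $\frac{1}{\omega(\vec Y_n)}\int_{Y_{n,1}}\big|F(x)-\A(Y_{n,1})\big|\,dx \to 0$ would need $Y_{n,1}$ to shrink \emph{around} a density point, which fails for a fixed sequence of intervals. The honest fix is therefore to use conditions (A),(B): the stacks fill up a set of measure $\ge c$, so $\bigcup_k Y_{n,k}$ has $\liminf$ measure $\ge c$; apply the Lebesgue density / Vitali covering to the sequence of \emph{level-sets}, concluding that for a positive-measure set of levels $Y_{n,k}$ is ``almost centered'' at a density point, hence $|\A(Y_{n,k})|\to 1$ along those $k$; then transport back via the level-to-level relation $|\A(Y_{n,k})| = |\A(Y_{n,1})|$ (Step 1) to get $|\A(Y_{n,1})|\to 1$. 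I expect Step 1 (the level-independence of $|\A(Y_{n,k})|$, which uses condition (\ref{lem:mod1}C) crucially so that $\zeta$ is constant on each level) to be routine, and the density-point extraction along the tall-and-fat stack — making precise that fatness forces \emph{some} level to be well-centered at a Lebesgue point — to be the main obstacle.
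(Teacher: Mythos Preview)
Your Step~1 is correct and is exactly what the paper uses: condition~(\ref{lem:mod1}C) guarantees that each level $Y_{n,k}$ lies entirely on one side of $t$, so $F|_{Y_{n,k+1}}$ is a unimodular multiple of $F|_{Y_{n,k}}$ composed with a translation. Consequently not only $|\A(Y_{n,k})|$ but also the $L^1$-oscillation
\[
\B(Y_{n,k})\;:=\;\int_{Y_{n,k}}\bigl|F(x)-\A(Y_{n,k})\bigr|\,dx
\]
is independent of $k$.

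Your Steps 2 and 3, however, wander through several dead ends (the $L^2$ defect identity, Weyl-type sums, telescoping, Vitali covers). The paper's argument is a two-line combination of the ingredients you have already isolated. The one observation you are missing is that
\[
\sum_{k=1}^{h_n}\B(Y_{n,k})\;\longrightarrow\;0\qquad(n\to\infty),
\]
which follows by approximating $F$ in $L^1(X)$ by a continuous function and using that the common width $\lam(Y_{n,k})=\lam(Y_{n,1})\le b/h_n\to0$ while the levels are pairwise disjoint. Now combine with Step~1: since all the summands $\B(Y_{n,k})$ are equal, $h_n\,\B(Y_{n,1})\to0$, and therefore
\[
\frac{\B(Y_{n,1})}{\lam(Y_{n,1})}\;=\;\frac{h_n\,\B(Y_{n,1})}{\lam(\vec Y_n)}\;\longrightarrow\;0
\]
by condition~(\ref{lem:mod1}B). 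For each large $n$ pick $x_n\in Y_{n,1}$ with $|F(x_n)-\A(Y_{n,1})|$ at most this normalized average; since $|F(x_n)|=1$, conclude $|\A(Y_{n,1})|\to1$.

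So your final plan---find one ``good'' level and transport back via Step~1---is right in spirit and is the same strategy as the paper's, but you need not hunt for a Lebesgue point inside a specific level. The summed oscillation over the whole stack is small automatically, and the equality of summands (your Step~1) makes \emph{every} level good simultaneously. What you flagged as the main obstacle dissolves once you work with $\sum_k\B(Y_{n,k})$ rather than with pointwise density.
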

\ni{\bf Remark}. In the above lemma, under the condition  (\ref{lem:mod1}C)  alone 
(i.e., without assuming  (\ref{lem:mod1}A) and (\ref{lem:mod1}B))  we obviously have
\[
|\A(Y_{n,1)}|=|\A(Y_{n,2)}|=\ldots=|\A(Y_{n,h_n)}|, \quad \text{ for all }\, n\geq1,
\]
in view of the equation \eqref{eq:F}  the function $F$ satisfies. In particular, 
the conclusion \ $\lim_{n\to\infty}\limits|\A(Y_{n,1})|=1$ in Lemma \ref{lem:mod1} 
is equivalent to the relation \ $\lim_{n\to\infty}\limits|\A(Y_{n,h_n})|=1$.

\begin{proof}[\bf Proof of Lemma \ref{lem:mod1}]
For subintervals $Y\subset X$,  set \ 
$\B(Y)=\int_{Y} |F(x)-\A(Y)|\,dx$ \
where  $\A(Y)$ stands for the average of $F$ over $Y$ (see \eqref{eq:aver}).
Then  \ $\lim_{n\to\infty}\limits \Big(\sum_{k=1}^{h_n}\,\B(Y_{n,k})\Big)=0$ \
and
\[
\lam(Y_{n,1})=\lam(Y_{n,2})=\ldots=
\lam(Y_{n,h_n})\leq\tfrac1{h_n}\to 0 \qquad \text{(as }n\to\infty),
\]
because of the assumption (\ref{lem:mod1}A).
Since $F$  satisfies \eqref{eq:F}, we have
\[
\B(Y_{n,1})=\B(Y_{n,2})=\ldots=\B(Y_{n,h_n}),
\]
and hence \
$\lim_{n\to\infty}\limits\,\B(Y_{n,1})\cdot h_{n}=0$. 
(Here we use the assumption (\ref{lem:mod1}C)).  It follows that  
\[
\lim_{n\to\infty}\bigg(\Big(\tfrac1{\lam(Y_{n,1})}\cdot\B(Y_{n,1})\Big)\cdot
\big(h_n\cdot\lam(Y_{n,1}) \big)\bigg)=
\lim_{n\to\infty}\bigg(\Big(\tfrac1{\lam(Y_{n,1})}\cdot\B(Y_{n,1})\Big)\cdot
\lam(\vec Y_n) \bigg)=0.
\]

\ni In view of the assumption (\ref{lem:mod1}B), we conclude that
\[
\lim_{n\to\infty}\tfrac1{\lam(Y_{n,1})}\cdot\B(Y_{n,1})=
\lim_{n\to\infty} \ \tfrac1{\lam(Y_{n,1})} \int_{Y_{n,1}} |F(x)-\A(Y_{n,1})|\,dx=0.
\]
Let  $\eps>0$  be given. Then for all sufficiently large $n$, one can select  
$x_{n}\in Y_{n,1}$  so that $F(x_n)\in S^1$ \ and \ $|F(x_n)-\A(Y_{n,1})|<\eps$. 
This implies that\,  $\big||\A(Y_{n,1})|-1\big|<\eps$, and, since  $\eps>0$ is arbitrary,  
$\lim_{n\to\infty}\limits |\A(Y_{n,1})|=1$, completing the proof of Lemma \ref{lem:mod1}.

\end{proof}
\v{-5}
\begin{lem}\label{lem:sixcon}
The six constants  $\alp^{1},\alp^{0},\bet^{1},\bet^{0},\gam^{1}$ and $\gam^{0}$
(see  \eqref{eq:alpkn})   lie in $S^{1}$.  \    \mbc{lem:sixcon\\ \ref{lem:sixcon}}                   
\end{lem}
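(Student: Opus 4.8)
The plan is to deduce Lemma~\ref{lem:sixcon} from Lemma~\ref{lem:mod1} by applying the latter to three carefully chosen sequences of distinct $f$-stacks built out of the intervals $A_n^k$, $B_n^k$, $C_n^k$ for $n\in\M$. Recall that by Lemma~\ref{lem:ds} the sequence $\big(A_n^k\big)_{k=-n}^n=\big(B_{\eps_n}(f^k(t))\big)_{k=-n}^n$ is a distinct $f$-stack of length $2n+1$; since $\eps_n>\tfrac{\psi(t)}{2n}$ for $n\in\M$, its measure is $(2n+1)\cdot 2\eps_n>2\psi(t)$, which is bounded away from $0$. First I would observe that by discarding finitely many terms we may assume $t\notin\bigl(f^k(t)-\eps_n,f^k(t)+\eps_n\bigr)$ for all $k$ with $1\le|k|\le n$: indeed $f^k(t)=t$ is impossible by aperiodicity, and $|f^k(t)-t|>\eps_n$ for large $n\in\M$ since the finitely many relevant distances are positive. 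However, the stack as written \emph{does} contain $t$ as the left endpoint of the central interval $A_n^0$, which is a problem for hypothesis (\ref{lem:mod1}C).

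The fix is to cut the stack into two halves that avoid $t$. Consider the two sub-stacks $\vec Y_n^{+}=\big(A_n^k\big)_{k=1}^{n}$ and $\vec Y_n^{-}=\big(A_n^{k}\big)_{k=-n}^{-1}$, each a distinct $f$-stack (being a consecutive sub-block of a distinct $f$-stack), of length $n$ and measure $n\cdot 2\eps_n>\psi(t)-o(1)$, bounded away from $0$; and for large $n\in\M$ neither contains $t$ by the previous paragraph. Applying Lemma~\ref{lem:mod1} to $\vec Y_n^{+}$ gives $\lim_{n}|\A(A_n^1)|=1$, i.e.\ $|\alp^1|=1$; applying it to $\vec Y_n^{-}$ (whose first interval is $A_n^{-n}$, hence by the Remark following Lemma~\ref{lem:mod1} equivalently its last interval $A_n^{-1}$) gives $\lim_n|\A(A_n^{-1})|=1$. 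But the equation \eqref{eq:F} relates $A_n^{-1}$ to $A_n^0$: since $f$ maps a neighborhood of $f^{-1}(t)$ onto a neighborhood of $t$, and $x<t$ on the relevant side, we get $|\A(A_n^0)|=|\A(A_n^{-1})|\to 1$, so $|\alp^0|=1$. (One must be slightly careful here that $A_n^{-1}$ is $f$-basic, i.e.\ contains no discontinuity of $f$; this holds for large $n\in\M$ because $\eps_n\le\rho'_n(t)\le\rho(f^{-1}(t))$.)

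Next I would handle $\bet^i$ and $\gam^i$. The point is that $B_n^k$ is the right half of $A_n^k$ and $C_n^k$ the left half, and since $A_n^k$ ($|k|\ge1$) is $f$-basic, $f$ restricted to it is a translation, so $\big(B_n^k\big)$ and $\big(C_n^k\big)$ are themselves distinct $f$-stacks over the appropriate index ranges, again of length $\sim n$ and measure $n\eps_n$ bounded away from $0$, and avoiding $t$. Applying Lemma~\ref{lem:mod1} to the sub-stacks $\big(B_n^k\big)_{k=1}^{n}$, $\big(B_n^k\big)_{k=-n}^{-1}$, $\big(C_n^k\big)_{k=1}^{n}$, $\big(C_n^k\big)_{k=-n}^{-1}$ yields $|\bet^1|=|\gam^1|=1$ and $\lim_n|\A(B_n^{-1})|=\lim_n|\A(C_n^{-1})|=1$; then \eqref{eq:F} (translation by $f$ from level $-1$ to level $0$, noting that $B_n^{-1}$ and $C_n^{-1}$ both lie to the appropriate side so $F$ is multiplied by a single unimodular factor $\theta^{\pm1}$ or $1$) gives $|\bet^0|=|\gam^0|=1$, and the proof is complete.

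The main obstacle I anticipate is the bookkeeping around the central level $k=0$: the intervals $A_n^0$, $B_n^0$, $C_n^0$ straddle (or abut) the discontinuity $t$ of the induced dynamics, so Lemma~\ref{lem:mod1} cannot be applied to any stack containing them, and one genuinely has to pass through level $-1$ (or $+1$) and transport the modulus-one conclusion across a single application of the functional equation \eqref{eq:F}. Establishing that the half-stacks are honestly distinct $f$-stacks of the right measure — in particular that $A_n^{\pm1}$, and hence $B_n^{\pm1}$, $C_n^{\pm1}$, are $f$-basic (which is where $\eps_n\le\rho'_n(t)\le\rho_n(t)$ enters through Proposition~\ref{prop:rho1}) and that $t$ lies outside all of them for large $n\in\M$ — is routine but is the step that must be written with care.
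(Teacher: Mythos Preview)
Your approach is correct and matches the paper's: split the $(2n{+}1)$-stack of Lemma~\ref{lem:ds} into two half-stacks and apply Lemma~\ref{lem:mod1} to each. Two remarks. First, your justification that $t\notin A_n^k$ for $1\le|k|\le n$ via ``the finitely many relevant distances are positive'' is not valid as written, since the number of $k$'s grows with~$n$; the correct reason is simply that the $A_n^k$ for $|k|\le n$ are pairwise disjoint and $t\in A_n^0$ (equivalently, $\eps_n\le\rho'_n(t)\le\tfrac12\Delta_n(t)$ gives $|f^k(t)-t|\ge 2\eps_n$ directly). Second, for $\bet^0$ and $\gam^0$ the paper avoids your transport step through level~$-1$: it runs the stacks $\big(B_n^k\big)_{k=-n+1}^{0}$ and $\big(C_n^k\big)_{k=-n+1}^{0}$ up to level~$0$ and reads off $|\bet^0|=|\gam^0|=1$ via the Remark after Lemma~\ref{lem:mod1}, which is legitimate because $B_n^0=(t,t+\eps_n)$ and $C_n^0=(t-\eps_n,t)$ are open and hence do \emph{not} contain~$t$, so hypothesis~(\ref{lem:mod1}C) holds outright. (Incidentally, $t$ is the center of $A_n^0$, not its left endpoint.)
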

\begin{proof} {\bf Case of $\bld{\alp^{1}}$}.  Recall that 
$\alp^1=\!\lim_{\substack{n\in \M\\ n\to\infty}}\limits \alp^{1}_{n}$  where
$\alp^1_n=\A(A^1_n)$. \v{-2}

We claim that the conditions of Lemma \ref{lem:mod1} are fulfilled
with\,  $\big(\vec Y_{n}\big)_{n\geq1}=\Big(\big(A^k_n\big)_{k=1}^n\Big)_{n\in \M}$.
Indeed, in this case $\vec Y$ is a distinct $f$-stack in view of Lemma \ref{lem:ds}.
We also have 
\[
h(\vec Y_n)=n \qquad \text{\small and} \qquad\lam(\vec Y_n)=2\eps_{n}h(\vec Y_n)=2\eps_{n}n=\psi(t)=2\eps_1
\]
for $n\in \M$. It follows from Lemma \ref{lem:mod1} that  $|\alp^1|=1$.

\ni{\bf Case of $\bld{\alp^0}$}. Similar argument. We set
 $\big(\vec Y_{n}\big)_{n\geq1}=\Big(\big(A^k_n\big)_{k=-n+1}^0\Big)_{n\in \M}$
 and take in account remark following Lemma \ref{lem:mod1} to get   $|\alp^0|=1$.
 
\ni{\bf Case of $\bld{\bet^1}$}. We set  
$\big(\vec Y_{n}\big)_{n\geq1}=\Big(\big(B^k_n\big)_{k=1}^n\Big)_{n\in \M}$ and 
in the same way apply Lemma \ref{lem:ds} to get $|\bet^1|=1$.

\ni{\bf Case of $\bld{\bet^0}$}. We set  
$\big(\vec Y_{n}\big)_{n\geq1}=\Big(\big(B^k_n\big)_{k=-n+1}^0\Big)_{n\in \M}$ and 
in the same way apply Lemma \ref{lem:ds} to get $|\bet^0|=1$.

\ni{\bf Cases of $\bld{\gam^1}$ and $\bld{\gam^0}$}. Similar to the preceding 
two cases.

This completes the proof of Lemma \ref{lem:sixcon}.

\end{proof}
Since\, $\A(A^k_n)=\tfrac{\A(B^k_n)+\A(C^k_n)}2$,\, it follows that,
for both  $i=0, 1$, we have  $\frac{|\bet^i+\gam^i|}2=|\alp^i|=1$,
and hence  
\[
\bet^i=\gam^i=\alp^i\in S^{1}.
\]
On the other hand, the fact that  $F$ satisfies \eqref{eq:F} implies that
\[
\bet^1=\bet^0; \qquad  \gam^1=\theta\cdot\gam^0.
\]
We conclude that\, $\bet^1=\gam^1=\theta\cdot\gam^0=\theta\cdot\bet^0=\theta\cdot\bet^1$,
whence  $\theta=1$, in contradiction with the initial assumption that $\theta\neq1$.

The proof of Proposition \ref{prop:F} is complete.

\section{Two extensions of Theorem  \ref{thm:gwm}}\label{sec:2vers}
The following two theorems (Theorem \ref{thm:gwm2} and \ref{thm:gwm3})
extends Theorem \ref{thm:gwm} to arbitrary $f$-invariant ergodic 
measures $\mu$ (rather than  Lebesgue measure $\lam$). These results are of interest
in the case when  IETs $(X,F)$  are minimal but not uniquely ergodic.
(Such IETs exist, see \cite{Kea_nue}, \cite{KeNe}).
\begin{thm}\label{thm:gwm2}
Let  $(X,f)$  be a minimal $\mu$-ergodic IET, $X=[0,b)$,  where  $\mu$
is an \mbox{$f$-invariant} Borel probability measure. Then the set
\begin{equation}\label{eq:xwmmu}
X_{\wm}(\mu)=\{0<t<b\mid  f_{t}  \text{ is weakly mixing (relative $\mu$)}\,\}
\end{equation}
is a residual set of full $\mu$-measure: $\mu(X_{\wm}(\mu))=1$.
\end{thm}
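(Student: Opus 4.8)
The plan is to imitate the proof of Theorem~\ref{thm:gwm}, replacing Lebesgue measure $\lam$ by the $f$-invariant ergodic probability measure $\mu$ throughout, and checking that every ingredient used survives the change. Concretely, Theorem~\ref{thm:gwm} was obtained by combining Theorem~\ref{thm:cwm} (a Diophantine sufficient condition: $\psi(t)>0$ implies $f_t$ weakly mixing) with Theorem~\ref{thm:mgg} (genericity: the critical set $X_{\po}=\{t\mid\psi(t)=0\}$ is meager of Lebesgue measure $0$). So first I would observe that Theorem~\ref{thm:cwm} and its engine, Proposition~\ref{prop:F}, never used ergodicity with respect to Lebesgue measure in an essential way once $f$ is minimal: the argument runs inside $X=[0,b)$ using the combinatorics of $f$-stacks and an eigenfunction of $f_t$. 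The one place $\lam$-ergodicity entered the derivation of Theorem~\ref{thm:cwm} from Proposition~\ref{prop:F} was to force $|G|$ to be constant on $X_t$ and to know that $\N_t(x)\neq\emptyset$; both of these go through verbatim once one assumes $f$ is $\mu$-ergodic and minimal (minimality gives $\N_t(x)\neq\emptyset$ for every $x$, and $\mu$-ergodicity of $f_t$ — which follows from $\mu$-ergodicity of $f$ by a standard induced-map argument — forces $|G|$ to be $\mu$-a.e.\ constant). Thus I would state and prove the $\mu$-version of Theorem~\ref{thm:cwm}: if $f$ is minimal and $\mu$-ergodic and $\psi(t)>0$, then $f_t$ is weakly mixing relative to $\mu$.

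Next I would establish the $\mu$-analogue of the genericity half, namely that $X_{\pgo}=\{t\mid\psi(t)>0\}$ has full $\mu$-measure. Here is the key point: Theorem~\ref{thm:mgg} already shows $\lam(X_{\po})=0$ and $X_{\po}$ meager, \emph{and this is stated and proved only under the aperiodicity hypothesis on $f$, with no reference to which invariant measure we care about} — $\psi=\psi_f$ is an intrinsic function of the IET $f$. But $\lam(X_{\po})=0$ does not by itself give $\mu(X_{\po})=0$ when $\mu\perp\lam$. What does the job is the proof of Theorem~\ref{thm:mgg}: it exhibits an explicit dense $G_\delta$ set $Z\subset X$ (the $\limsup$ of the supports $Z_n$ of the thinned stacks) with $\psi(z)>0$ for every $z\in Z$ (Lemma~\ref{lem:psi}), so that $X_{\pgo}\supset Z$ is residual. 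Residual subsets of $X$ have full measure for \emph{every} atomless Borel measure? — no, that is false in general. So the correct route is different: I would re-run the argument of Section~\ref{sec:pr:thm:mgg} but conclude via $\mu$-ergodicity instead of $\lam$-ergodicity. The set $X_{\pgo}$ is Borel and $f$-invariant (since $\psi(x)=\psi(f(x))$), hence $\mu(X_{\pgo})\in\{0,1\}$ by $\mu$-ergodicity. To rule out $\mu(X_{\pgo})=0$, equivalently $\mu(X_{\po})=1$, I use that $X_{\pgo}$ contains the residual set $Z$, which is dense; and more usefully, I use that $X_{\po}$ is meager and $f$-invariant with $\mu$ ergodic. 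A meager $f$-invariant set can still have full $\mu$-measure, so I instead invoke that for a \emph{minimal} $f$ and \emph{any} invariant measure $\mu$, the support of $\mu$ is all of $X$; then since $Z$ is a dense $G_\delta$ — in fact $Z=\bigcap_n U_n$ with each $U_n=\bigcup_{k\ge n}\mathrm{int}(Z_k)$ open dense — I would try to show $\mu(U_n)=1$ for each $n$, giving $\mu(Z)=1$ and hence $\mu(X_{\pgo})=1$.

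The way to get $\mu(U_n)=1$, and the step I expect to be the main obstacle, is to upgrade the Lebesgue measure estimate $\lam(Z_k)\ge\tfrac{b}{9r}$ to a uniform $\mu$-estimate. The stacks $\vec Y_n$ produced by Lemma~\ref{lem:1} are towers over a return interval $Y$, and the $\lam$-lower bound $\lam(\vec Y_n)\ge b/r$ came from the fact that the full tower over $Y$ partitions $X$, so one column has measure $\ge b/(sr)\cdot$(something) — but that computation used that $\lam$ is the length. For a general invariant $\mu$, the tower over $Y$ still partitions $X$ up to a $\mu$-null set, so $\sum_{k}\mu(f^n(Y_k))=\mu(X)=1$ with the sets in the $k$-th column all of equal $\mu$-measure $\mu(Y_k)$; hence the $k$-th column has $\mu$-measure $n_k\mu(Y_k)$, and these sum over $k$ to $1$, so \emph{some} column has $\mu$-measure $\ge 1/s\ge 1/r$. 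This is exactly the structure of the proof of Lemma~\ref{lem:1}, so I would restate Lemma~\ref{lem:1} with "measure at least $\tfrac{1}{r}$" interpreted for $\mu$, its proof being identical. The genuinely delicate point is the passage from $\vec Y_n$ to the thinned stack $\vec Z_n$ obtained by chopping off the bottom/top thirds and taking middle-third subintervals $\Theta(Y_{n,k})$: for $\lam$ one loses exactly a factor $9$ (a factor $3$ from discarding $2/3$ of the columns — actually only $\sim 1/3$ — and a factor $3$ from $\Theta$), but $\Theta(Y)$ shrinks \emph{length} by $3$, not $\mu$-measure, and $\mu$ need not be comparable to length on small intervals. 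To repair this I would not use $\Theta$ at all for the measure bound: instead note that for Lemma~\ref{lem:psi} what is really needed is a distinct $f$-stack of controlled width $\eps_i$ centered so that $B_{\eps_i}(z)$ sits inside a middle column; one can take $\eps_i$ to be $\tfrac12$ the width (= length) of the original column $Y_{n_i,k}$ and keep the original columns (no $\Theta$), provided $z$ lies in the open middle portion of some column. So I would redefine $Z_n$ as the union of the middle $\sim h(n)/3$ \emph{original} columns $Y_{n,k}$ (not their $\Theta$-images), whose $\mu$-measure is $\ge\tfrac13\mu(\vec Y_n)\ge\tfrac1{3r}$ minus the two discarded thirds — uniformly bounded below by a constant $c=c(r)>0$ — and then $\mu(Z)\ge\limsup\mu(Z_n)>0$, and by $f$-invariance and $\mu$-ergodicity of $X_{\pgo}\supset Z$ we get $\mu(X_{\pgo})=1$. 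The eigenfunction/averaging argument of Section~\ref{pr:thm:cwm} and Proposition~\ref{prop:F} is insensitive to the measure (it integrates $F$ against $dx$ on subintervals and only uses $|F|=1$ and $\mu$-ergodicity to produce $\theta\ne 1$), so combining the $\mu$-version of Theorem~\ref{thm:cwm} with $\mu(X_{\pgo})=1$ yields $\mu(X_{\wm}(\mu))\ge\mu(X_{\pgo})=1$ and that $X_{\wm}(\mu)\supset X_{\pgo}\supset Z$ is residual, completing the proof of Theorem~\ref{thm:gwm2}.
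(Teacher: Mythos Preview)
Your plan attacks the problem by redoing the whole machinery of Theorems~\ref{thm:cwm} and~\ref{thm:mgg} with $\mu$ in place of $\lam$. The paper does something entirely different and much shorter: it takes the increasing continuous bijection $\bet\colon X\to X$ with $\bet_*\mu=\lam$ (the distribution function of $\mu$, which is a homeomorphism because minimality forces $\mu$ to be non-atomic and fully supported), observes that $g=\bet^{-1}\!\circ f\circ\bet$ is a $\lam$-ergodic IET (this is Veech's normalization), applies Theorem~\ref{thm:gwm} to $g$, and pulls the conclusion back through $\bet$. Since $\bet$ is a homeomorphism sending $[0,t)$ to $[0,\bet(t))$, the induced maps correspond, ``residual'' transfers, and $\bet_*\mu=\lam$ converts ``full $\lam$-measure'' for $g$ into ``full $\mu$-measure'' for $f$.

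Your direct approach has real gaps in both halves. For the $\mu$-analogue of Theorem~\ref{thm:cwm}: the engine is Proposition~\ref{prop:F}, whose proof (Lemma~\ref{lem:mod1}) integrates $F$ against $dx$ and uses the identity $\B(Y_{n,1})=\B(Y_{n,2})=\cdots=\B(Y_{n,h_n})$, which requires the eigen-equation \eqref{eq:F} to hold $\lam$-a.e. A $\mu$-eigenfunction satisfies \eqref{eq:F} only $\mu$-a.e., so when $\mu\perp\lam$ this identity collapses. If instead you replace $\A,\B$ by $\mu$-averages, the hypothesis (\ref{lem:mod1}B) becomes $\liminf\mu(\vec Y_n)>0$, but the stacks $(A^k_n)$, $(B^k_n)$, $(C^k_n)$ used in Section~6 have $\lam(\vec Y_n)\asymp\psi(t)$ by construction, with no control whatsoever on their $\mu$-measure; the condition $\psi(t)>0$ is a Lebesgue-width statement and says nothing about $\mu$. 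So the sentence ``Proposition~\ref{prop:F} is insensitive to the measure'' is not right.

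For the $\mu$-analogue of Theorem~\ref{thm:mgg}: you correctly spot that $\Theta$ is a Lebesgue operation and propose to drop it, taking $Z_n$ to be the union of the middle $\sim h(n)/3$ \emph{full} columns $Y_{n,k}$. This does give $\mu(Z_n)\ge c/r$ (all columns have equal $\mu$-measure by invariance), but it destroys Lemma~\ref{lem:psi}: for $z\in Y_{n,k}$ lying within Euclidean distance $o(\omega(\vec Y_n))$ of the boundary of $Y_{n,k}$, the ball $B_{\eps_i}(z)$ does not sit inside $Y_{n,k}$, the stack $\vec B_i$ need not be a distinct $f$-stack, and you get no lower bound on $p(n_i)\rho'_{p(n_i)}(z)$. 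The role of $\Theta$ is precisely to guarantee $B_{\eps_i}(z)\subset Y_{n_i,k}$, and that is an irreducibly Euclidean requirement. Thus neither half of your program goes through as written; the normalization $\bet$ is the missing idea.
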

\begin{proof}
Let $\bet\colon X\to X$ be an increasing 
continuous bijection taking measure $\mu$  to $\lam$. Then the composition  
$g=\bet^{\SSst-1}{\Sst\circ}\, f{\Sst\circ}\,\bet$\,  becomes a $\lam$-ergodic  IET
which topologically and combinatorially is $\bet$-isomoirphic
to $f$. (This is the essense of the normalization procedure discussed in 
\mbox{\cite[Section~1]{Ve_IET}}).

Theorem  \ref{thm:gwm} applies to $g$ to deduce Theorem \ref{thm:gwm2}.
\end{proof}

It is known that for any minimal  IET  $(X,f)$  the set\, $\MM_{\rm erg}(f)$ 
of ergodic $f$-invariant Borel probability measures on  $X$  is finite
(see \cite{Ka_fin} and \cite{Ve_IET}). 

\begin{thm}\label{thm:gwm3}
Let  $f\colon X\to X$  be a minimal IET, $X=[0,b)$. Then
\begin{align}\label{eq:xwmmu}
X_{\wm}(\MM_{\rm erg})=\{0<t<b\mid  f_{t}  \text{ is} & \text{ weakly mixing}\\ 
   &\text{ relative to every }\, \mu\in \MM_{\rm erg}(f)\,\}\notag
\end{align}
is a residual subset of $X$.
\end{thm}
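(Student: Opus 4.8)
\textbf{Proof proposal for Theorem \ref{thm:gwm3}.}

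The plan is to reduce Theorem \ref{thm:gwm3} to Theorem \ref{thm:gwm2} by intersecting over the finitely many ergodic invariant measures. First I would invoke the result (cited in the excerpt, from \cite{Ka_fin} and \cite{Ve_IET}) that for a minimal IET $(X,f)$ the set $\MM_{\rm erg}(f)=\{\mu_1,\ldots,\mu_m\}$ is finite. Then by construction
\[
X_{\wm}(\MM_{\rm erg})=\bigcap_{j=1}^{m} X_{\wm}(\mu_j),
\]
where $X_{\wm}(\mu_j)$ is the set from Theorem \ref{thm:gwm2}. Each $(X,f)$ is minimal and $\mu_j$-ergodic, so Theorem \ref{thm:gwm2} applies to every $j$ and tells us that each $X_{\wm}(\mu_j)$ is a residual subset of $X=[0,b)$. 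A finite (indeed countable) intersection of residual sets is residual, since residual sets are closed under countable intersection and $(0,b)$ is a Baire space; hence $X_{\wm}(\MM_{\rm erg})$ is residual, which is exactly the assertion.

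The one technical point that deserves care is verifying the displayed identity $X_{\wm}(\MM_{\rm erg})=\bigcap_j X_{\wm}(\mu_j)$, i.e. that ``weakly mixing relative to every $\mu\in\MM_{\rm erg}(f)$'' really matches ``weakly mixing relative to $\mu_j$ for each $j$''. This is essentially a definitional unwinding: $f_t$ weakly mixing relative to $\mu$ means $(X_t,f_t,\mu|_{X_t})$ has no nonconstant measurable eigenfunctions; the set $\MM_{\rm erg}(f_t)$ of ergodic $f_t$-invariant measures is in natural bijection with $\MM_{\rm erg}(f)$ (inducing to a subinterval is a measure-theoretic isomorphism on the relevant part, after renormalization), so requiring weak mixing for all members of $\MM_{\rm erg}(f)$ is the same as requiring it for each $\mu_j$ individually. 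I would spell this correspondence out in one or two sentences, noting that $t$ being fixed makes this a uniform finite list.

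I do not expect a genuine obstacle here: the entire content of Theorem \ref{thm:gwm3} is already in Theorem \ref{thm:gwm2} plus the cited finiteness of $\MM_{\rm erg}(f)$, and the Baire-category bookkeeping is routine. If anything, the mildly delicate step is the bijection $\MM_{\rm erg}(f)\leftrightarrow\MM_{\rm erg}(f_t)$ and the observation that this bijection is compatible with the weak-mixing property; but this is standard for induced maps (Abramov-type considerations) and requires no new ideas. One should also remark, as the excerpt does for Theorem \ref{thm:gwm}, that in general one does not get a single measure of full mass for the intersection when $m\geq2$, which is why the statement of Theorem \ref{thm:gwm3} only claims residuality and not a full-measure conclusion.
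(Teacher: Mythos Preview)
Your proposal is correct and matches the paper's proof essentially verbatim: invoke finiteness of $\MM_{\rm erg}(f)$, write $X_{\wm}(\MM_{\rm erg})=\bigcap_{\mu\in\MM_{\rm erg}(f)} X_{\wm}(\mu)$, apply Theorem~\ref{thm:gwm2} to each factor, and use that a finite intersection of residual sets is residual. The only difference is that your discussion of the bijection $\MM_{\rm erg}(f)\leftrightarrow\MM_{\rm erg}(f_t)$ is unnecessary here, since the displayed identity is purely definitional from \eqref{eq:xwmmu} and the definition of $X_{\wm}(\mu)$; the paper simply asserts it without comment.
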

\begin{proof}
Follows from Theorem \ref{thm:gwm2}  because 
\[
X_{\wm}(\MM_{\rm erg}(f))\,=
   \h{-3}{\bigcap_{\mu\in\MM_{\rm erg}(f)}\limits}\h{-2} X_{\wm}(\mu)
\]
is a finite intersection of residual sets.
\end{proof}

\section{Final Comments.}\label{sec:last}
The discussion in this section is conducted under the assumption that
$(X,f)$  is a fixed \lam-ergodic IET,  $X=[0,1]$,  so that both\, 
$X_{\nwm}$\, and\, $X_{\po}$ (the the exceptional and the 
critical sets, respectively) are defined
(by \eqref{eq:xnwm} and  \eqref{eq:crit}).

By Theorem \ref{thm:cwm},
every exceptional point is critical, i.e., the inclusion
\begin{equation}
X_{\nwm}(f)\!=\!X_{\nwm}\!\subset\!X_{\po}=\!X_{\po}(f)
\end{equation}
holds, and, by Theorem \ref{thm:mgg},  the critical set $X_{\po}$ is meager and has 
Lebesgue measure $0$. 

In this section we sketch some results on the size of the sets 
$X_{\nwm}$\, and\, $X_{\po}$, concerning their Hausdorff dimensions and 
cardinalities. The proofs and more detailed description of the results 
will appear elsewhere.

\subsection{Case: Irrational rotation} Let $a\in\R$ and set  $f(x)=R_a(x)=x+a \pmod1$.
(Such $f$ can be viewed as a $2$-IET). One verifies that in this case $f_{t}$
is a $3$-IET  provided that $t\neq D'=\{na\mid n\in\Z\}$ (see \eqref{eq:dp}).
Theorem \ref{thm:mgg} applies to deduce that $X_{\po}$ is meager and has 
Lebesgue measure $0$. 

It follows from \cite{BoNo} that 
\[
\big\{\Q+\Q\, a\big\}\cap (X\minus D')\cap X_{\po}=\emptyset,
\]
(i.e., no rational linear combination of $1$ and $a$  lying in  $X\minus D'$  
is exceptional). 

Both sets $X_{\nwm}(R_a)$\, and\, $X_{\po}(R_a)$  are countable if and only if $a$ is 
badly approximable (the sequences of partial quotients in its continued fraction 
expansion is bounded). This fact can be deduced from  \cite{Ve_69}.
In fact, if $a$ is badly approximable then the three sets,  
$X_{\nwm}$, $X_{\po}$ and $D'$,  coincide.

For Lebesgue almost all $a$, the Hausdorff dimensions of the sets  $X_{\nwm}(R_a)$\, 
and\, $X_{\po}(R_a)$  vanish. Nevertheless, for some irrational $a$, both Hausdorff 
dimensions can be 1. (This claim is based on an unpublished result by Yitwah Cheung). 

\subsection{Case: Linearly recurrent IETs} 
Every minimal $r$-IET  $f$  can be naturally coded by a minimal subshift $\Omega_{f}$ over the 
alphabet $\A_r=\{1,2,\ldots,r\}$ the following natural way described in \cite{Kea_IET}.
(Every point $x\in X$  corresponds to the infinite word $\big(W_x(k)\big)_{k\in\Z}\in (\A_r)^\Z$
determined by the rule:  $W_x(k)=s$ \  if \ $f^{k}(x)\in X_s=[d_{s-1}, d_s]$,
see \eqref{eq:disc}).

A minimal $r$-IET  $f$  is said to be linearly recurrent if the corresponding 
subshift $\Omega_{f}$ is linearly recurrent in the sense of \cite{Du} (see 
also \cite{DHS}).  Linearly recurrent IETs also appear in the literature as ``IETs of 
constant type'' (a characterization in terms of Rauzy-Veech induction).

Linearly recurrent IETs include IETs of periodic type (also known as 
pseudo-Anosov IETs) and, in particular, minimal IETs over quadratic number fields
(the IETs with the lengths of exchanged intervals lying in one and 
the same real quadratic number field). IETs over quadratic number fields
always reduce to Pseudo-Anosov IETs (see \cite{BoCa}).

One can prove that for linear recurrent IETs $(X,f)$ each of the sets $X_{\nwm}(f)$\, 
and\, $X_{\po}(f)$ is at most countable. 

We believe that for $r\geq4$ there are pseudo-Anosov $r$-IETs 
which are persitently weakly mixing (i.e., for which $X_{\nwm}(f)=\emptyset$).


\end{document}